\newcommand{\Kon}{\mathcal{K}_o^n}
\newcommand{\Ken}{\mathcal{K}_e^n}
\newcommand{\Kn}{\mathcal{K}^n}
\newcommand{\vol}{\mathrm{vol}}
\newcommand{\conv}{\mathrm{conv}}
\newcommand{\aff}{\mathrm{aff}}
\newcommand{\lin}{\mathrm{lin}\,}
\newcommand{\R}{\mathbb{R}}
\newcommand{\N}{\mathbb{N}}
\newcommand{\Sph}{\mathbb{S}}
\newcommand{\W}{\mathrm{W}}
\newcommand{\cura}{\mathrm{C}}
\newcommand{\sura}{\mathrm{S}}
\newcommand{\dW}{\widetilde{\mathrm{W}}}
\newcommand{\dcura}{\widetilde{\mathrm{C}}}
\newcommand{\diff}{\,\mathrm{d}}
\newcommand{\ip}[2]{\left\langle #1,#2\right\rangle}
\newcommand{\va}{{\boldsymbol a}}
\newcommand{\vb}{{\boldsymbol b}}
\newcommand{\ve}{{\boldsymbol e}}
\newcommand{\vt}{{\boldsymbol t}}
\newcommand{\vu}{{\boldsymbol u}}
\newcommand{\vv}{{\boldsymbol v}}
\newcommand{\vx}{{\boldsymbol x}}
\newcommand{\vy}{{\boldsymbol y}}
\newcommand{\vz}{{\boldsymbol z}}
\newcommand{\vtheta}{{\boldsymbol \theta}}
\newcommand{\vnull}{{\boldsymbol 0}}
\newcommand{\ov}{\overline}
\DeclareMathOperator{\dplus}{{\widetilde +}}
\newtheorem{theorem}{Theorem}[section]
\newtheorem{corollary}[theorem]{Corollary}
\newtheorem{lemma}[theorem]{Lemma}
\newtheorem{proposition}[theorem]{Proposition}
\numberwithin{equation}{section}
\begin{document}

\title[Necessary  conditions for the even dual Minkowski problem]{Necessary subspace concentration conditions for the even dual Minkowski problem}
\author{Martin Henk}
\author{Hannes Pollehn}
\address{Technische Universit\"at Berlin,
		Institut f\"ur Mathematik,
		Strasse des 17. Juni 136,
		D-10623 Berlin,
		Germany}
\email{henk@math.tu-berlin.de, pollehn@math.tu-berlin.de}
\dedicatory{Dedicated to the memory of Peter M. Gruber}
\keywords{dual curvature measure, cone-volume measure, surface area measure, integral curvature, $L_p$-Minkowski Problem, logarithmic Minkowski problem, dual Brunn-Minkowski theory.}
\subjclass[2010]{52A40, 52A38}
\begin{abstract}
We prove tight subspace concentration inequalities  for  
the dual curvature measures $\dcura_q(K,\cdot)$ of an $n$-dimensional
origin-symmetric convex body for $q\geq n+1$. This supplements former
results obtained in the range $q\leq n$.
\end{abstract}
\maketitle

\section{Introduction}
Let $\Kn$ denote the set of convex bodies in $\R^n$,
i.e., the family of all non-empty convex and compact subsets
$K\subset\R^n$. 
The set of convex bodies having the origin as an interior
point is denoted by $\Kon$ and the subset of origin-symmetric convex bodies, i.e., those sets
$K\in\Kon$ satisfying  $K=-K$,
is denoted by $\Ken$. For $\vx,\vy\in\R^n$,
let $\langle \vx,\vy\rangle$ denote the standard inner product and
$|\vx|=\sqrt{\langle \vx,\vx\rangle}$ the Euclidean norm. We write
$B_n$ for the $n$-dimensional Euclidean unit ball, i.e., 
$B_n=\{ \vx\in\R^n : |\vx|\leq 1 \}$, and $\Sph^{n-1}=\partial B_n$, where $\partial K$ is the set of boundary points of $K\in\Kn$. The $k$-dimensional Hausdorff-measure will be denoted by
$\mathcal{H}^{k}(\cdot)$ and 
instead of $\mathcal{H}^{n}(\cdot)$ we
will also write $\vol(\cdot)$ for the $n$-dimensional volume.  For a
$k$-dimensional set $S\subset\R^n$ we also write $\vol_k(S)$ instead
of $\mathcal{H}^{k}(S)$.  

At the heart of the Brunn-Minkowski theory is the study of the volume
functional with respect to the Minkowski addition of convex bodies. This leads to the theory
of mixed volumes and, in particular, to  the quermassintegrals $\W_i(K)$ of a
convex body $K\in\Kn$.  The latter  may be defined via the classical Steiner
formula, expressing the volume of the Minkowski sum of $K$ and
$\lambda\,B_n$, i.e., the volume of the parallel body of $K$ at distance
$\lambda$ 
as a polynomial in $\lambda$  (cf., e.g., \cite[Sect.~4.2]{Schneider:1993})
\begin{equation}
\vol(K+\lambda\,B_n)=\sum_{i=0}^n \lambda^i\, \binom{n}{i} \W_i(K).
\label{eq:steiner} 
\end{equation}  
A more direct geometric interpretation is given
by Kubota's integral formula (cf., e.g., \cite[Subsect.~5.3.2]{Schneider:1993}),  showing that they are --- up to some
constants  --- the means of the volumes of projections 
\begin{equation} 
\W_{n-i}(K)=\frac{\vol(B_n)}{\vol_i(B_i)}\int_{G(n,i)}\vol_i(K|L)\diff L,
\quad i=1,\dots,n, 
\label{eq:kubota}
\end{equation}  
where integration
is taken with respect to the rotation-invariant probability measure on
the Grassmannian $G(n,i)$ of all $i$-dimensional linear subspaces  and
$K|L$ denotes the image of the orthogonal projection onto $L$.  

A local version of the Steiner formula above leads to two important
series of geometric measures, the  
area measures $\sura_i(K,\cdot)$ and the curvature measures
$\cura_i(K,\cdot)$, $i=0,\dots,n-1$, of a convex body $K$. Here we will
only briefly describe the area measures since with respect to
characterization problems of geometric measures they 
form the   ``primal''  counterpart to the dual curvature measures we
are interested in.

To this end, we denote for 
$\omega\subseteq \Sph^{n-1}$ by $\nu^{-1}_K(\omega)\subseteq\partial
K$  the set of all boundary points  of $K$ having
an outer unit normal in $\omega$. We use the notation
$\nu^{-1}_K$ in order to indicate that for smooth convex bodies
$K$  it is the inverse of the Gauss map assigning to a boundary
point of $K$ its unique outer unit normal.   
Moreover, for
$\vx\in\R^n$ let $r_K(\vx)\in K$ be the point in $K$
closest to $\vx$.  Then for a Borel set
$\omega\subseteq \Sph^{n-1}$ and $\lambda>0$ we consider the 
local  parallel body 
\begin{equation}
B_K(\lambda,\omega) = \left\{\vx\in\R^n : 0< |\vx-r_K(\vx)|\leq \lambda
\text{ and } r_K(\vx)\in\nu_K^{-1}(\omega)\right\}. 
\label{eq:local_parallel}
\end{equation}  
The local Steiner formula expresses   the volume of $B_K(\lambda,\omega)$ as a polynomial in $\lambda$ 
whose  coefficients are (up to constants
depending on $i,n$)  the area measures (cf., e.g., \cite[Sect.~4.2]{Schneider:1993})
\begin{equation}
\vol( B_K(\lambda,\omega)) = \frac{1}{n} \sum_{i=1}^{n} \lambda^i\,
\binom{n}{i} \sura_{n-i}(K,\omega).
\label{eq:local_steiner} 
\end{equation}
$\sura_{n-1}(K,\cdot)$ is also known as the surface area measure of
$K$. 
The area measures  may also be regarded as the (right hand side) differentials of
the quermassintegrals, since for $L\in\Kn$ 
\begin{equation}
\lim_{\epsilon \downarrow 0}\frac{\W_{n-1-i}(K+\epsilon
	L)-\W_{n-1-i}(K)}{\epsilon}=\int_{\Sph^{n-1}}h_L(\vu)\diff\sura_{i}(K,\vu).
\label{eq:differentials}
\end{equation} 
Here $h_L(\cdot)$ denotes the support function of $L$ (cf.~Section
2).  Also observe that $\sura_i(K,\Sph^{n-1})=n\,\W_{n-i}(K)$, $i=0,\dots, n-1$.  

To characterize the  area measures $\sura_i(K,\cdot)$,
$i\in\{1,\dots,n-1\}$,  among the  finite Borel
measures on the sphere is a cornerstone of
the Brunn-Minkowski theory. Today this  problem is known as  the {\em Minkowski--Christoffel} 
problem, since for $i=n-1$ 
it is the classical Minkowski problem and for $i=1$
it is the Christoffel problem.  We refer to \cite[Chapter
8]{Schneider:1993} for more information and references.

There are two far-reaching extensions of the classical Brunn-\-Minkowski theory,
both arising 
basically  by replacing the classical Minkowski-addition
by another additive operation (cf.~\cite{Gardner:2013,Gardner:2014}). The first one is the $L_p$ addition
introduced by Firey (see, e.g.,  \cite{Firey:1962}) which leads to the rich and emerging 
{\em $L_p$-Brunn-Minkowski theory} for which we refer to
\cite[Sect.~9.1, 9.2]{Schneider:1993}). 

The second one, introduced by Lutwak \cite{Lutwak:1975b,Lutwak:1975a},  is based on the radial addition $\dplus$ where $\vx
\dplus\vy = \vx+\vy$  if $\vx,\vy$ are linearly dependent and
$\vnull$ otherwise.  Considering the volume of radial additions leads
to the {\em dual Brunn-Minkowski theory} (cf.~\cite[Sect.~9.3]{Schneider:1993}) 
with dual mixed volumes, and, in
particular, also with dual quermassintegrals $\dW_i(K)$
arising via a dual Steiner formula  (cf.~\eqref{eq:steiner})
\begin{equation}
\vol(K\dplus\lambda\,B_n)=\sum_{i=0}^n \lambda^i\, \binom{n}{i}
\dW_i(K).
\label{eq:dual_steiner} 
\end{equation} 
In general the radial addition of two convex sets is not a convex set,
but the radial addition of two star bodies is again a  star body. This is one of the features of the dual
Brunn-Minkowski theory which makes it so useful.    
The celebrated solution
of the Busemann-Petty problem is amongst the recent successes 
of the  dual Brunn-Minkowski theory, cf. \cite{Gardner:1994,
	GardnerKoldobskySchlumprecht:1999, Zhang:1999}, and it 
also has connections and applications to integral geometry,
Minkowski geometry, and the local theory of Banach spaces. 

In analogy to Kubota's formula \eqref{eq:kubota}  the dual quermassintegrals $\dW_i(K)$  admit the following integral
geometric representation as the means of the volumes of sections (cf.~\cite[Sect. 9.3]{Schneider:1993}) 
\begin{equation}
\dW_{n-i}(K)=\frac{\vol(B_n)}{\vol_i(B_i)}\int_{G(n,i)}\vol_i(K\cap
L)\diff L,
\quad i=1,\dots,n. 
\end{equation} 
There are many more ``dualities'' between the classical and
dual theory, but  there were no dual geometric
measures corresponding to the   area and  curvature measures.
This missing link was recently established in the 
ground-breaking paper \cite{Huang:2016} by Huang, Lutwak, Yang and
Zhang. Let $\rho_K$ be the radial function (cf.~Section 2)
of a convex body $K\in \Kon$. 
Analogous to
\eqref{eq:local_parallel} we consider for a Borel set $\eta\subseteq \Sph^{n-1}$ and $\lambda>0$ the set 
\begin{equation}
\begin{split}
{\widetilde A}_K&(\lambda,\eta) = \\ 
&\left\{\vx\in\R^n\setminus\{\vnull\}  : (1-\rho_K(\vx))\,|\vx|\leq \lambda
\text{ and } \rho_K(\vx)\vx\in\nu_K^{-1}(\eta)\right\}\cup\{\vnull\}. 
\end{split} 
\end{equation}    
Then there also exists  a local Steiner type formula of these  local
dual parallel sets \cite[Theorem 3.1]{Huang:2016} (cf.~\eqref{eq:local_steiner})
\begin{equation}
\vol({\widetilde A}_K(\lambda,\eta))=\sum_{i=0}^n \binom{n}{i}\lambda^{i}\dcura_{n-i}(K,\eta).
\end{equation} 
$\dcura_{i}(K,\cdot)$ is called the {\em $i$th dual curvature
	measure} and they are the counterparts to the curvature  measures
$\cura_i(K,\cdot)$  within the dual Brunn-Minkowski theory. Observe that
$\dcura_i(K,\Sph^{n-1})=\dW_{n-i}(K)$. 
As the  area measures (cf.~\eqref{eq:differentials}), the dual
curvature measures  may also be
considered as differentials of the dual quermassintegrals, even in a
stronger form (see  \cite[Section 4]{Huang:2016}). 
We want to 
point out  that  there are also
dual area measures corresponding to the area measures
in the classical theory (see \cite{Huang:2016}).   

Huang, Lutwak, Yang and Zhang also gave  an explicit 
integral representation of the dual curvature measures  which allowed
them to define more generally for any $q\in \R$ the  $q$th dual
curvature measure  of a convex body $K\in\Kon$ as \cite[Def. 3.2]{Huang:2016} 
\begin{equation}
\dcura_q(K,\eta) = \frac1n
\int\limits_{\alpha_K^\ast(\eta)} \rho_K(\vu)^q
\diff\mathcal{H}^{n-1}(\vu).  
\label{eq:dual_curvature_measure}
\end{equation}
Here  $\alpha_K^\ast(\eta)$
denotes the set of directions $\vu\in \Sph^{n-1}$, such that the boundary
point $\rho_K(\vu)\vu$ belongs to $\nu_K^{-1}(\eta)$.  
The analog to the Minkowski-Christoffel problem in the dual
Brunn-Minkowski theory is  (cf.~\cite[Sect.~5]{Huang:2016}) 

\smallskip
\begin{center}
	\begin{minipage}{0.95\textwidth}{\em The  dual Minkowski problem.} Given  a finite Borel measure $\mu$ on
		$\Sph^{n-1}$ and $q\in \R$. Find necessary and sufficient conditions
		for the existence of a convex body $K\in\Kon$ such that $\dcura_q(K,\cdot)=\mu$.  
	\end{minipage} 
\end{center}
\smallskip 

\noindent
An amazing feature of these dual curvature measures
is that they 
link two other well-known  fundamental geometric measures of a convex body
(cf.~\cite[Lemma 3.8]{Huang:2016}):
when  $q=0$  the dual curvature measure
$\dcura_0(K,\cdot)$ is -- up to a factor of $n$ -- 
Aleksandrov's integral curvature of the polar body of $K$ and for
$q=n$ the dual curvature measure coincides with the cone-volume
measure of $K$ given by 
\begin{equation}
\dcura_n(K,\eta)	= V_K(\eta) = \frac{1}{n} \int\limits_{\nu_K^{-1}(\eta)} \langle \vu,\nu_K(\vu)\rangle \diff\mathcal{H}^{n-1}(\vu).
\end{equation}



Similarly to the Minkowski problem, solving the dual Minkowski problem is equivalent to solving a Monge-Amp\`{e}re type partial differential equation if the measure $\mu$ has a density function $g : \Sph^{n-1}\to \R$. In particular, the dual Minkowski problem amounts to solving the Monge-Amp\`ere  equation
\begin{equation}
\label{Monge-Ampere}
\frac1n\,h(\vx)|\nabla h(\vx) + h(\vx)\,\vx |^{q-n}\det[h_{ij}(\vx) + \delta_{ij}h(\vx)] = g(\vx),
\end{equation}
where $[h_{ij}(\vx)]$ is the Hessian matrix of the (unknown) support function $h$ with respect to an orthonormal
frame on $\Sph^{n-1}$, and $\delta_{ij}$ is the Kronecker delta.

If $\frac1n\,h(\vx)|\nabla h(\vx) + h(\vx)\vx|^{q-n}$ were omitted in \eqref{Monge-Ampere}, then \eqref{Monge-Ampere} would become the partial differential
equation of the classical Minkowski problem, see, e.g., 
\cite{Caffarelli:1990,Cheng:1976,Nirenberg:1953}. If only the factor
$|\nabla h(\vx) + h(\vx)\,\vx|^{q-n}$ were omitted, then
equation \eqref{Monge-Ampere} would become the partial differential equation associated with the 
cone volume measure, the so-called {\sl logarithmic Minkowski problem} (see,
e.g., \cite{Boroczky:2013, Chou:2006}). Due to the gradient component
in \eqref{Monge-Ampere} the dual Minkowski problem is
significantly more challenging than the classical Minkowski problem as
well as the logarithmic Minkowski problem.


The cone-volume measure for convex bodies  has been studied extensively over the last few
years in many different contexts, see, e.g., \cite{Barthe:2005,
	Boroczky:2015c, Boroczky:2012,Boroczky:2013,Boroczky:2015a,
	Gardner:2014,  Gromov:1987, Haberl:2014, He:2006, Henk:2014, Henk:2005, 
	Huang:2016, Ludwig:2010a, Ludwig:2010b,	Lutwak:2005, Lutwak:2010a,
	Lutwak:2010b, Ma:2015, Naor:2007, Naor:2003, Paouris:2012,
	Stancu:2012, Zhu:2014, Zhu:2015}.  
One very important property of the cone-volume measure -- and which
makes it so essential --  is its
$\mathrm{SL}(n)$-invariance, or simply called affine invariance.  It
is also the subject of the central {\em logarithmic Minkowski problem}
which asks  for sufficient and necessary conditions 
of a measure $\mu$ on $\Sph^{n-1}$  to be  the cone-volume measure
$V_K(\cdot)$ of a
convex body $K\in\Kon$. This is the $p=0$ limit case of the general
{\sl $L_p$-Minkowski problem} within the above mentioned $L_p$
Brunn-Minkowski theory for which we refer to \cite{Hug:2005,
	Lutwak:1993, Zhu:2015a} and the references within.

The discrete, planar, even case of the logarithmic Minkowski
problem, i.e., with respect to origin-symmetric convex polygons, was
completely solved by Stancu~\cite{Stancu:2002,Stancu:2003}, and later
Zhu~\cite{Zhu:2014} as well as B\"or\"oczky, Heged{\H{u}}s and
Zhu~\cite{Boroczky:2015b} settled (in particular) the  case when $K$ is a polytope 
whose outer normals are in general position.

In~\cite{Boroczky:2013}, B\"or\"oczky, Lutwak, Yang and Zhang gave a
complete characterization of the cone-volume measure of
origin-symmetric convex bodies among the even measures on the sphere. 
The key feature of such a measure is expressed via the following
condition:  A non-zero,
finite Borel measure $\mu$ on the unit sphere satisfies the 
\emph{subspace concentration condition} if
\begin{equation}
\frac{\mu(\Sph^{n-1}\cap L)}{\mu(\Sph^{n-1})} \leq \frac{\dim L}{n}
\label{eq:subspace_concentration_inequality}
\end{equation}
for every proper subspace $L$ of $\R^n$, and whenever we have equality in~\eqref{eq:subspace_concentration_inequality} for some $L$, there is a subspace $L'$ complementary to $L$, such that $\mu$ is concentrated on $\Sph^{n-1}\cap (L\cup L')$.

Apart from the uniqueness aspect, 
the symmetric case of the logarithmic Minkowski problem is settled.
\begin{theorem}[\cite{Boroczky:2013}]
	A non-zero, finite, even Borel measure $\mu$ on $\Sph^{n-1}$ is
	the cone-volume measure of $K\in \Ken$  if and only if $\mu$ satisfies the subspace concentration condition.
	\label{thm:logarithmic_minkowski_problem}
\end{theorem}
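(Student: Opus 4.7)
The theorem has two parts, necessity and sufficiency, which I would establish separately.

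\textbf{Necessity.} Fix $K\in\Ken$ and a proper subspace $L\subset\R^n$ of dimension $k$. I would consider the one-parameter family of volume-preserving linear maps $\phi_t$ acting as $t\cdot\mathrm{id}$ on $L$ and $t^{-k/(n-k)}\cdot\mathrm{id}$ on $L^\perp$. Since $\vol(\phi_t K)=\vol(K)$, the logarithmic Minkowski inequality for origin-symmetric bodies (a deep but established companion result) gives
\[
0=\tfrac{1}{n}\log\tfrac{\vol(\phi_t K)}{\vol(K)}\leq\tfrac{1}{\vol(K)}\int_{\Sph^{n-1}}\log\tfrac{h_{\phi_t K}(\vu)}{h_K(\vu)}\diff V_K(\vu).
\]
From $h_{\phi_t K}(\vu)=h_K(\phi_t^\top\vu)$ one reads off $\log(h_{\phi_t K}/h_K)=\log t$ on $\Sph^{n-1}\cap L$, while on $\Sph^{n-1}\setminus L$ the sublinearity of $h_K$ together with $\phi_t^\top\vu=t\vu_L+t^{-k/(n-k)}\vu_{L^\perp}$ yields the asymptotic bound $\log(h_{\phi_t K}/h_K)\le-\tfrac{k}{n-k}\log t+O(1)$ as $t\downarrow 0$. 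Substituting, dividing by $\log t$ (which is negative), and letting $t\downarrow 0$ yields $V_K(\Sph^{n-1}\cap L)\le\tfrac{k}{n}V_K(\Sph^{n-1})$. The equality characterization of the logarithmic Minkowski inequality then forces the direct-sum splitting of $K$ across $L$ and a complementary subspace.

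\textbf{Sufficiency.} By weak approximation it suffices to realize an even discrete measure $\mu=\sum_j c_j(\delta_{\vu_j}+\delta_{-\vu_j})$. I would set up the variational problem of minimizing
\[
\Phi(P)=\int_{\Sph^{n-1}}\log h_P(\vu)\diff\mu(\vu)
\]
over origin-symmetric polytopes $P$ with facet normals contained in $\{\pm\vu_j\}$ and $\vol(P)=1$. The variational formula $n\,\diff V_P(\vu)=h_P(\vu)\,\diff\sura_{n-1}(P,\vu)$ combined with Lagrange multipliers at an $n$-dimensional minimizer $P^\ast$ yields the Euler--Lagrange equation $\mu=\lambda n\,V_{P^\ast}$ for some $\lambda>0$, and rescaling $P^\ast$ by $c=(\lambda n)^{1/n}$ produces the convex body $K=cP^\ast$ with $V_K=\mu$.

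\textbf{Main obstacle.} The crux is to rule out degenerate minimizing sequences. Consider $\{P_m\}$ minimizing with $P_m$ becoming $\varepsilon_m$-thin along a proper $k$-subspace $L$; the constraint $\vol(P_m)=1$ forces $P_m$ to extend roughly $\varepsilon_m^{-k/(n-k)}$ in $L^\perp$, and a direct calculation yields
\[
\Phi(P_m)=\tfrac{1}{n-k}\bigl[n\,\mu(\Sph^{n-1}\cap L)-k\,\mu(\Sph^{n-1})\bigr]\log\varepsilon_m+O(1).
\]
The strict subspace concentration inequality $\mu(\Sph^{n-1}\cap L)/\mu(\Sph^{n-1})<k/n$ makes the bracket negative, so $\Phi(P_m)\to+\infty$, contradicting minimality. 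The most delicate portion of the proof is the equality case $\mu(\Sph^{n-1}\cap L)/\mu(\Sph^{n-1})=k/n$: here the bracket vanishes, $\Phi(P_m)$ stays bounded along the collapsing sequence, and one must exploit the hypothesis that $\mu$ is concentrated on $\Sph^{n-1}\cap(L\cup L')$ for a complementary $L'$ to decouple the problem into two lower-dimensional even logarithmic Minkowski problems on $L$ and $L'$, which are then solved by induction on the dimension.
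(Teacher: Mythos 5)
This statement is quoted in the paper from B\"or\"oczky--Lutwak--Yang--Zhang \cite{Boroczky:2013}; the paper itself gives no proof of it, so your proposal can only be measured against the original argument.

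Your necessity argument has a genuine gap: it is built on ``the logarithmic Minkowski inequality for origin-symmetric bodies'' as ``a deep but established companion result.'' That inequality,
\[
\frac{1}{n}\log\frac{\vol(L)}{\vol(K)}\leq\frac{1}{\vol(K)}\int_{\Sph^{n-1}}\log\frac{h_L(\vu)}{h_K(\vu)}\diff V_K(\vu),
\]
is the log-Minkowski conjecture; it is proved only for $n=2$ (in \cite{Boroczky:2012}, by the same authors) and remains open for $n\geq 3$. The logical traffic in the literature actually runs the other way: the subspace concentration inequality is one of the known consequences that the conjectural log-Minkowski inequality would have to reproduce, and is cited as evidence for it. The actual proof of necessity in \cite{Boroczky:2013} is direct and geometric: one decomposes $K$ into the cones $\conv\{\vnull,\nu_K^{-1}(\Sph^{n-1}\cap L)\}$, projects onto $L$, and compares volumes fibre by fibre over $K|L$ using symmetry --- exactly the scheme that the present paper generalizes in Section 4 to prove Theorem 1.8 for $\dcura_q$ with $q\geq n+1$. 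So while your diagonal-dilation computation correctly shows that the log-Minkowski inequality \emph{would imply} the subspace concentration inequality, it does not constitute a proof of necessity in dimension $n\geq 3$.

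Your sufficiency outline is essentially the B\"or\"oczky--Lutwak--Yang--Zhang variational scheme: minimize $\int\log h\,\diff\mu$ under a volume normalization, use the strict subspace concentration inequality to show that sequences collapsing onto a subspace $L$ drive the functional to $+\infty$, and handle the equality case by splitting $\mu$ over $L\cup L'$ and inducting on dimension. The skeleton and the key blow-up estimate $\Phi(P_m)=\frac{1}{n-k}\bigl[n\,\mu(\Sph^{n-1}\cap L)-k\,\mu(\Sph^{n-1})\bigr]\log\varepsilon_m+O(1)$ are the right ones, though as written it is a sketch: the reduction from general even measures to the discrete case needs uniform non-degeneracy bounds on the approximating bodies (BLYZ avoid this by working with Aleksandrov bodies of general even measures directly), and the Lagrange-multiplier step requires knowing the minimizer is $n$-dimensional, which is precisely what the compactness argument must deliver. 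The decisive defect of the proposal remains the necessity half.
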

An extension of the
validity of  inequality \eqref{eq:subspace_concentration_inequality}
to centered bodies, i.e.,   bodies whose center of mass is at the
origin, was given  in the discrete case by Henk and Linke \cite{Henk:2014}, and
in the general setting by B\"or\"oczky and
Henk~\cite{Boroczky:2016}. For a related stability result concerning
\eqref{eq:subspace_concentration_inequality} we refer to \cite{Boroczky:2017a}.
In \cite{Chen:2017}, Chen, Li and Zhu proved that also in the non-symmetric logarithmic Minkowski problem the subspace concentration condition is sufficient.


A generalization (up to the equality case) of the sufficiency part of Theorem
\ref{thm:logarithmic_minkowski_problem} to the $q$th dual curvature
measure  for
$q\in(0,n]$ was  given by Huang, Lutwak, Yang and Zhang.
For clarity, we separate their main result into 
the next two theorems.

\begin{theorem}[\protect{\cite[Theorem 6.6]{Huang:2016}}]
	Let $q\in (0,1]$. A non-zero, finite,  even Borel measure $\mu$ on
	$\Sph^{n-1}$ is the $q$th dual curvature measure of a convex
        body $K\in\Kon$ if and only if $\mu$ is not concentrated on any great subsphere.
	\label{thm:subspace_mass0}
\end{theorem}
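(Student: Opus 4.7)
For the necessity direction, suppose $\mu=\dcura_q(K,\cdot)$ for some $K\in\Kon$ and, aiming for a contradiction, assume $\mu$ is concentrated on $\Sph^{n-1}\cap L$ for some proper linear subspace $L\subsetneq\R^n$. Since $K$ has non-empty interior, the surface area measure $\sura_{n-1}(K,\cdot)$ cannot be concentrated on any great subsphere (otherwise $K$ would be unbounded in the $L^\perp$-direction), so $\mathcal{H}^{n-1}(\nu_K^{-1}(\Sph^{n-1}\setminus L))>0$. Because $K\in\Kon$, the radial map $\vu\mapsto\rho_K(\vu)\,\vu$ is a bi-Lipschitz homeomorphism $\Sph^{n-1}\to\partial K$, giving $\mathcal{H}^{n-1}(\alpha_K^\ast(\Sph^{n-1}\setminus L))>0$. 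Combined with $\rho_K>0$ on $\Sph^{n-1}$, the representation \eqref{eq:dual_curvature_measure} yields
\[
\mu(\Sph^{n-1}\setminus L) \;=\; \tfrac{1}{n}\int_{\alpha_K^\ast(\Sph^{n-1}\setminus L)}\rho_K(\vu)^q\,\diff\mathcal{H}^{n-1}(\vu) \;>\; 0,
\]
contradicting the assumed concentration.

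For the sufficiency, the plan is a variational approach on the class $\Ken$, which is natural because $\mu$ is even. I would consider the minimization
\[
\inf\Bigl\{\,\Phi(Q):=\tfrac{1}{|\mu|}\int_{\Sph^{n-1}}\log h_Q(\vu)\,\diff\mu(\vu) \;:\; Q\in\Ken,\ \dcura_q(Q,\Sph^{n-1})=|\mu|\,\Bigr\},
\]
which, by the scale invariance of $\Phi(Q)-\tfrac{1}{q}\log\dcura_q(Q,\Sph^{n-1})$, may be studied directly on the constraint surface. For a minimizing sequence $K_j\in\Ken$, degeneration has to be excluded on both sides. Collapse towards a lower-dimensional body $K_0\subseteq L$ with $\dim L<n$ is precluded because then $\rho_{K_0}\equiv 0$ off $L$, and dominated convergence would give $\dcura_q(K_j,\Sph^{n-1})\to 0$, violating the constraint. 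Unbounded diameter is ruled out by combining minimality with non-concentration of $\mu$: if $h_{K_j}$ were to blow up in some direction, origin-symmetry would propagate the blow-up to a spherical cap whose $\mu$-mass is bounded below (by a compactness argument over the Grassmannian using non-concentration), forcing $\int\log h_{K_j}\,\diff\mu\to+\infty$ and contradicting minimality. Blaschke selection then extracts a Hausdorff limit $K\in\Ken$ which, by continuity of $\Phi$ and $\dcura_q(\cdot,\Sph^{n-1})$ on $\Ken$, is a minimizer.

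At the minimizer $K$, the Euler--Lagrange equation is derived by varying $K$ through Wulff shapes $Q_t=\{\vx\in\R^n:\langle\vx,\vu\rangle\leq h_K(\vu)+t\phi(\vu)\ \forall\vu\in\Sph^{n-1}\}$ for arbitrary even $\phi\in C(\Sph^{n-1})$ and $|t|$ small. The variational formula for dual curvature measures of Huang--Lutwak--Yang--Zhang \cite[Section~4]{Huang:2016} gives
\[
\left.\frac{d}{dt}\right|_{t=0}\dcura_q(Q_t,\Sph^{n-1}) \;=\; q\int_{\Sph^{n-1}}\frac{\phi(\vu)}{h_K(\vu)}\,\diff\dcura_q(K,\vu),
\]
while differentiating the log-integral yields $\int(\phi/h_K)\diff\mu$. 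Vanishing of the first variation of the Lagrangian, combined with the evenness of $\phi$, $\mu$, and $\dcura_q(K,\cdot)$ (the latter because $K=-K$), forces $\mu=c\,\dcura_q(K,\cdot)$ for some $c>0$; the homogeneity $\dcura_q(\lambda K,\cdot)=\lambda^q\dcura_q(K,\cdot)$ then lets a final rescaling absorb $c$, producing a body in $\Kon$ whose $q$th dual curvature measure is $\mu$. The main obstacle throughout is the non-degeneracy step in the previous paragraph -- simultaneously ruling out dimensional collapse and diameter blow-up -- and it is precisely here that the range $q\in(0,1]$ is essential, since for larger $q$ the $\int\log h$-functional cannot be controlled by non-concentration alone and the stronger subspace-concentration hypotheses investigated in the present paper become necessary.
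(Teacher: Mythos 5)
First, note that the paper you are annotating does not prove this statement at all: it is quoted verbatim from Huang--Lutwak--Yang--Zhang \cite[Theorem 6.6]{Huang:2016} as background, so there is no ``paper's own proof'' to compare against. Judged on its own terms, your necessity argument is essentially correct and complete: $S_{n-1}(K,\cdot)=\mathcal{H}^{n-1}(\nu_K^{-1}(\cdot))$ is not concentrated on any great subsphere for a full-dimensional body, $\alpha_K^\ast(\eta)$ is the preimage of $\nu_K^{-1}(\eta)$ under the bi-Lipschitz radial map, and $\rho_K$ is bounded below on $\Sph^{n-1}$, so $\dcura_q(K,\Sph^{n-1}\setminus L)>0$. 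Your sufficiency plan is also the right one in outline --- it is precisely the HLYZ variational scheme, including the correct functional, the Wulff-shape variation, and the use of the variational formula for $\widetilde{V}_q=\dcura_q(\cdot,\Sph^{n-1})$ followed by a rescaling to absorb the Lagrange multiplier.

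The genuine gap is in the non-degeneracy step, which you correctly identify as the crux but then dispose of with an argument that does not work as stated. If $R_j=\max_{\vu}h_{K_j}(\vu)\to\infty$ is attained at $\vv_j\to\vv$, origin-symmetry gives $h_{K_j}(\vu)\geq R_j|\ip{\vu}{\vv_j}|$, and hence $\int\log h_{K_j}\diff\mu\geq |\mu|\log R_j+\int\log|\ip{\vu}{\vv_j}|\diff\mu(\vu)$; but the second integral can equal $-\infty$ (e.g.\ if $\mu$ has an atom on $\vv^\perp$), and non-concentration on great subspheres does not exclude this. Splitting off a cap $\{|\ip{\vu}{\vv_j}|\geq\delta\}$ of $\mu$-mass $m_\delta>0$ only yields $\int\log h_{K_j}\diff\mu\geq m_\delta\log(R_j\delta)+(|\mu|-m_\delta)\log r_j$ with $r_j$ the inradius, and $r_j\to 0$ is not excluded a priori: a sequence in the constraint set $\dcura_q(Q,\Sph^{n-1})=|\mu|$ can blow up in some directions while collapsing in others, so your two degeneration cases are not independent and cannot be dismissed one at a time. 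The actual proof must couple the constraint (which forces $R_j$ and $r_j$ to balance, via a lower bound for $\widetilde{V}_q$ of the double cone $\conv\{\pm R_j\vv_j,\,r_j(B_n\cap\vv_j^\perp)\}\subset K_j$) with a quantitative form of non-concentration, and it is exactly in this estimate that the hypothesis $q\leq 1$ is used; asserting that the estimate follows ``by a compactness argument over the Grassmannian'' leaves the one step that actually distinguishes $q\in(0,1]$ from $q\in(1,n]$ unproven.
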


\begin{theorem}[\protect{\cite[Theorem 6.6]{Huang:2016}}]
	Let $q\in (1,n]$ and let  $\mu$  be a non-zero, finite,  even Borel measure  on
	$\Sph^{n-1}$ satisfying  the {\em  subspace mass inequality}  
	\begin{equation}
	\frac{\mu(\Sph^{n-1}\cap L)}{\mu(\Sph^{n-1})} <
	1-\frac{q-1}{q}\frac{n-\dim L}{n-1}
	\label{eq:subspace_mass} 
	\end{equation}
	for every proper subspace $L$ of $\R^n$. Then there exists an
	$o$-symmetric convex body  $K\in\Ken$ with $\dcura_q(K,\cdot)=\mu$. 
	\label{thm:subspace_mass}
\end{theorem}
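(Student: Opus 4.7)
The plan is a variational approach based on the variational formula for dual curvature measures established by Huang-Lutwak-Yang-Zhang \cite{Huang:2016}: for $K\in\Kon$ and continuous $f\in C(\Sph^{n-1})$, the family of Aleksandrov bodies $[h_t]$ obtained from the logarithmic perturbation $h_Ke^{tf}$ of the support function satisfies
\begin{equation}
\frac{\dbm}{\dbm t}\bigg|_{t=0}\widetilde{\V}_q([h_t])=q\int_{\Sph^{n-1}}f(\vv)\diff\dcura_q(K,\vv),
\end{equation}
where $\widetilde{\V}_q(K)=\tfrac1n\int_{\Sph^{n-1}}\rho_K(\vu)^q\diff\mathcal{H}^{n-1}(\vu)=\dcura_q(K,\Sph^{n-1})$. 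This identifies $\dcura_q$ as the first variation of $\widetilde{\V}_q$ under logarithmic perturbation and makes it natural to seek the prescribed body as a critical point of a logarithmic functional.

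Accordingly, I would associate to the given even measure $\mu$, with total mass $|\mu|=\mu(\Sph^{n-1})$, the scale-invariant functional
\begin{equation}
\Phi(K):=q\int_{\Sph^{n-1}}\log h_K(\vv)\diff\mu(\vv)-|\mu|\log\widetilde{\V}_q(K),\quad K\in\Ken,
\end{equation}
and consider the minimization problem $\inf_{K\in\Ken}\Phi(K)$; equivalently, one may minimize $\int\log h_K\diff\mu$ subject to the normalization $\widetilde{\V}_q(K)=1$. If a minimizer $K^\ast\in\Ken$ exists, the variational formula applied to even test functions $f\in C(\Sph^{n-1})$, together with the Lagrange condition, yields
\begin{equation}
\dcura_q(K^\ast,\cdot)=\frac{\widetilde{\V}_q(K^\ast)}{|\mu|}\,\mu
\end{equation}
as an identity of even measures, using the origin-symmetry of $K^\ast$. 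The positive homothet $K:=\bigl(|\mu|/\widetilde{\V}_q(K^\ast)\bigr)^{1/q}K^\ast$ then satisfies $\dcura_q(K,\cdot)=\mu$, since $\dcura_q$ is homogeneous of degree $q$ in $K$ by \eqref{eq:dual_curvature_measure}.

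The main obstacle is establishing existence of the minimizer $K^\ast$, and this is precisely where \eqref{eq:subspace_mass} is needed. After normalization, any minimizing sequence $K_j$ has, up to a subsequence, a limiting shape described by a flag $\{\vnull\}=L_0\subsetneq L_1\subsetneq\cdots\subsetneq L_m=\R^n$ together with characteristic widths $w_{j,1}\leq\cdots\leq w_{j,m}$ satisfying $w_{j,i+1}/w_{j,i}\to\infty$ whenever the flag is nontrivial ($m\geq 2$). The heart of the proof is to compute the asymptotics of $\widetilde{\V}_q(K_j)$ and of $\int\log h_{K_j}\diff\mu$ as linear combinations of $\log w_{j,i}$, by splitting $\Sph^{n-1}$ into appropriate neighborhoods of $\Sph^{n-1}\cap(L_i\setminus L_{i-1})$; the coefficients come out as explicit expressions in $q$, the dimensions $\dim L_i$, and the subspace masses $\mu(\Sph^{n-1}\cap L_i)$. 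The bound \eqref{eq:subspace_mass}, applied to each $L_i$, is exactly what is needed to force the coefficient of every $\log w_{j,i}$ in $\Phi(K_j)$ to be strictly positive; since at least one width ratio diverges along any degenerate sequence, this yields $\Phi(K_j)\to+\infty$, ruling out degeneration and ensuring that the infimum is attained at an interior body $K^\ast\in\Ken$. The delicate flag-asymptotic bookkeeping required to make this matching of coefficients precise, rather than the Euler-Lagrange step, is the real technical core of the argument.
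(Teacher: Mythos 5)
First, a point of orientation: the present paper does not prove this statement at all. Theorem \ref{thm:subspace_mass} is quoted in the introduction as background from Huang, Lutwak, Yang and Zhang \cite[Theorem 6.6]{Huang:2016}; the paper's own contributions concern the complementary \emph{necessity} question for $q\geq n+1$. So there is no in-paper proof to compare against, and your proposal has to be measured against the original source.

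Against that source, your outline identifies the correct strategy: the existence proof in \cite{Huang:2016} is exactly a variational one, optimizing a scale-invariant logarithmic functional of the type you write down over $\Ken$, deriving $\dcura_q(K^\ast,\cdot)=c\,\mu$ from the variational formula for $\widetilde{V}_q(K)=\dcura_q(K,\Sph^{n-1})$ under logarithmic perturbations of the support function (restricted to even test functions, which suffices since both measures are even), and rescaling via the degree-$q$ homogeneity of $\dcura_q$. However, as written this is a plan rather than a proof. The entire analytic content of the theorem sits in the compactness step: one must show that a minimizing sequence normalized by $\widetilde{V}_q(K_j)=1$ cannot have unbounded diameter, and this is precisely where the subspace mass inequality \eqref{eq:subspace_mass} is consumed. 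You assert that the relevant coefficients in the asymptotic expansion of $\Phi(K_j)$ ``come out as'' positive under \eqref{eq:subspace_mass}, but you do not derive the expansion; indeed you yourself name the ``delicate flag-asymptotic bookkeeping'' as the real technical core and then omit it. In \cite{Huang:2016} this is a genuine quantitative estimate, obtained by partitioning $\Sph^{n-1}$ according to an orthonormal frame adapted to the principal widths of $K_j$ and bounding $\int\log h_{K_j}\diff\mu$ from below against $\log\widetilde{V}_q(K_j)$ from above; until it is carried out, existence of the minimizer --- and hence the theorem --- is not established. Two smaller points also need attention: finiteness of the infimum, and the fact that the Euler--Lagrange step leans on the nontrivial variational formula for Wulff shapes (Theorem 4.5 of \cite{Huang:2016}), which must be imported explicitly or proved.
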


Observe that for $q=n$ the inequality \eqref{eq:subspace_mass} becomes
essentially \eqref{eq:subspace_concentration_inequality}. In case that the parameter $q$ is an integer this result was strengthened by Zhao.
\begin{theorem}[\cite{Zhao:2017b}]
	Let $q\in\{1,\dots,n-1\}$ and let  $\mu$  be a non-zero, finite,  even Borel measure  on
	$\Sph^{n-1}$ satisfying
	\begin{equation}
	\frac{\mu(\Sph^{n-1}\cap L)}{\mu(\Sph^{n-1})} < \min\left\{ \frac{\dim L}{q},1\right\},
	\end{equation}
	for every proper subspace $L$ of $\R^n$. Then there exists a
	$o$-symmetric convex body  $K\in\Ken$ with $\dcura_q(K,\cdot)=\mu$.
\end{theorem}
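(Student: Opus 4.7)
The plan is to adapt the variational framework of Huang--Lutwak--Yang--Zhang (HLYZ) underlying Theorem~\ref{thm:subspace_mass}, but to exploit the fact that for integer $q\in\{1,\dots,n-1\}$ the dual quermassintegral $\dW_{n-q}$ admits the Kubota-type representation as the mean of $q$-dimensional section volumes $\vol_q(K\cap L)$. This integer-$q$ structure is what converts the HLYZ threshold into the sharp $\min\{\dim L/q,1\}$.

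First I would reduce to the discrete setting, approximating $\mu$ weakly by even measures $\mu_N$ supported on finitely many antipodal pairs, each still satisfying the strict subspace inequality with a common gap. If the problem can be solved for every $\mu_N$ by some $P_N\in\Ken$, the weak continuity of $\dcura_q$ together with an \emph{a priori} Hausdorff compactness of $(P_N)$ will produce a limit body $K\in\Ken$ with $\dcura_q(K,\cdot)=\mu$.

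For the discrete problem I would pose the minimization
\begin{equation*}
\inf\left\{\int_{\Sph^{n-1}}\log h_K(\vu)\diff\mu_N(\vu)\;:\;K\in\Ken,\ \dW_{n-q}(K)=1\right\}.
\end{equation*}
Any interior minimizer must be a polytope with facet normals in $\mathrm{supp}\,\mu_N$, and the first-variation identity for $\dW_{n-q}$, combined with the integral representation \eqref{eq:dual_curvature_measure}, forces the Euler--Lagrange condition $\dcura_q(P,\cdot)=c\,\mu_N$ for some $c>0$; a suitable dilation yields the desired $P_N$.

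The hard part will be ruling out degeneracy of a minimizing sequence $(P_k)$. Suppose a subsequence concentrates toward a proper subspace $L_0$ of dimension $m$, with widths $\eta_k\to 0$ in the $n-m$ directions orthogonal to $L_0$. Kubota's section formula makes the scaling explicit: $\dW_{n-q}(P_k)$ behaves like $\eta_k^{\min(n-m,q)}$ times a factor controlled by the projection $P_k|L_0$, so the constraint $\dW_{n-q}(P_k)=1$ forces a compensating dilation of $P_k|L_0$. Balancing this against $\log h_{P_k}$---which tends to $-\infty$ on $\Sph^{n-1}\cap L_0^\perp$ while remaining bounded on $\Sph^{n-1}\cap L_0$---the logarithm of the functional is driven to $-\infty$ unless
\begin{equation*}
\frac{\mu_N(\Sph^{n-1}\cap L_0)}{\mu_N(\Sph^{n-1})}\geq\min\left\{\frac{m}{q},1\right\}.
\end{equation*}
The strict subspace mass hypothesis excludes precisely this, and non-degeneracy follows. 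The integer condition $q\in\N$ is essential here: only then does $\dW_{n-q}$ admit the section representation that produces the exponent $\min(n-m,q)$, which in turn yields the sharp bound $\min\{m/q,1\}$ rather than the weaker HLYZ threshold.
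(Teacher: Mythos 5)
First, a point of order: this theorem is not proved in the paper you were given. It is quoted from Zhao's article \cite{Zhao:2017b} as background for the authors' own results (which concern the complementary range $q\geq n+1$), so there is no in-paper proof to compare your proposal against. I will therefore assess your plan against the known argument of Zhao, whose strategy your outline does correctly identify at the top level: the HLYZ variational scheme (minimize an entropy-type functional $\int_{\Sph^{n-1}}\log h_Q\diff\mu$ under a normalization of $\dW_{n-q}$, derive $\dcura_q(K,\cdot)=c\,\mu$ from the first variation), with the improvement for integer $q$ coming from the Kubota-type representation of $\dW_{n-q}(K)$ as an average of $\vol_q(K\cap\xi)$ over $\xi\in G(n,q)$, which yields the exponent $\min(n-m,q)$ for bodies collapsing onto an $m$-dimensional subspace.

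That said, as a proof the proposal has a genuine gap precisely where the theorem's content lies. The entire difference between the HLYZ threshold \eqref{eq:subspace_mass} and the sharp bound $\min\{\dim L/q,1\}$ is the quantitative balance in the degeneration analysis: one must take a minimizing sequence $(P_k)$ with $\dW_{n-q}(P_k)$ normalized, allow it simultaneously to collapse toward some subspace and to blow up inside another, estimate $\dW_{n-q}(P_k)$ from above \emph{and} below via the section formula with the correct dependence on both the widths $\eta_k$ and the circumradii $R_k$, and then show that the functional diverges in the right direction unless $\mu$ concentrates at least $\min\{m/q,1\}$ of its mass on a suitable subspace. You assert this balance rather than carry it out, and the assertion as written is logically inverted: if the functional you are \emph{infimizing} is ``driven to $-\infty$'' along degenerating sequences, then the infimum is $-\infty$ and no minimizer exists -- the opposite of the non-degeneracy you want. (Compare the cone-volume case $q=n$: along $[-1/\epsilon,1/\epsilon]\times[-\epsilon,\epsilon]$ the functional $\int\log h\diff\mu$ tends to $-\infty$ exactly when $\mu$ overloads the orthogonal complement of the collapsing subspace, which is what must be \emph{excluded}, with the roles of $L_0$ and $L_0^{\perp}$ tracked carefully.) Finally, the reduction to discrete measures is an unnecessary detour that creates new obligations you do not discharge: a uniform subspace-mass gap for the approximants $\mu_N$ over \emph{all} subspaces, and uniform in/circumradius bounds for the solutions $P_N$ so that the limit body lies in $\Ken$ and weak continuity of $\dcura_q$ applies; Zhao (like HLYZ) works with general even measures directly and avoids this. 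The skeleton is right, but the theorem is the estimate you skipped.
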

An extension of this result to all $q\in(0,n)$ was very recently given  by
B\"or\"oczky, Lutwak, Yang, Zhang and Zhao~\cite{Boroczky:2017b}. That this subspace
concentration bound is indeed
necessary was shown  by B\"or\"oczky and the authors.
\begin{theorem}[\cite{BoroczkyHenkPollehn:2017}]
Let $K\in\Ken$, $q\in (0,n)$ and let $L\subset \R^n$ be a proper
subspace. Then we have 
\begin{equation}
	\frac{\dcura_q(K,\Sph^{n-1}\cap L)}{\dcura_q(K,\Sph^{n-1})} <
        \min\left\{ \frac{\dim L}{q},1\right\}.
\end{equation}
\end{theorem}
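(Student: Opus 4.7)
My strategy is to rewrite both sides as weighted volume integrals over $K$ and then reduce the subspace mass bound to a radial monotonicity statement on the projection $K|L$ that encodes the origin-symmetry $K=-K$. By polar coordinates one has $n\,\dcura_{q}(K,\Sph^{n-1})=\int_{\Sph^{n-1}}\rho_{K}^{q}\,\diff\mathcal H^{n-1}=q\int_{K}|\vx|^{q-n}\diff\vx$, and the analogous computation applied to \eqref{eq:dual_curvature_measure} yields
\[
n\,\dcura_{q}(K,\Sph^{n-1}\cap L)=q\int_{C_{L}(K)}|\vx|^{q-n}\diff\vx,
\]
where $C_{L}(K):=\{\vx\in K\setminus\{\vnull\}:\vx/|\vx|\in\alpha_{K}^{\ast}(\Sph^{n-1}\cap L)\}$ is the radial cone from the origin over those directions whose radial boundary point of $K$ has an outer normal in $L$. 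The bound $<1$ is then immediate: since $L$ is a proper subspace, the outer normals of $K$ cannot all lie in $L$ (otherwise $K$ would be unbounded in the $L^{\perp}$-direction), so $K\setminus C_{L}(K)$ has positive Lebesgue measure and hence strictly positive $|\cdot|^{q-n}$-weighted measure.

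For the sharper bound $<k/q$ with $k=\dim L$ (binding only when $k<q$), I would decompose $\vx=\vy+\vz$ with $\vy\in L$, $\vz\in L^{\perp}$, and introduce the slice weight
\[
J(\vy):=\int_{S(\vy)}|\vy+\vz|^{q-n}\diff\mathcal H^{n-k}(\vz),\qquad S(\vy):=\{\vz\in L^{\perp}:\vy+\vz\in K\}.
\]
Fubini gives $\int_{K}|\vx|^{q-n}\diff\vx=\int_{K|L}J(\vy)\diff\vy$; parametrizing the part of $\partial K$ with outer normal in $L$ by $\vx=\rho_{K|L}(\va)\va+\vz$ with $\va\in\Sph^{k-1}\cap L$, $\vz\in S(\rho_{K|L}(\va)\va)$, together with the area identity $\rho_{K|L}(\va)^{k}\diff\mathcal H^{k-1}(\va)=h_{K|L}(\nu(\vy))\diff\mathcal H^{k-1}(\vy)$ on $\partial(K|L)$, produces the complementary identity $q\int_{C_{L}(K)}|\vx|^{q-n}\diff\vx=\int_{\partial(K|L)}h_{K|L}(\nu(\vy))J(\vy)\diff\mathcal H^{k-1}(\vy)$. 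The divergence theorem in $L$ applied to $\vy\mapsto\vy\,J(\vy)$ (whose divergence is $kJ+\langle\vy,\nabla J\rangle$) then converts the target inequality $q\int_{C_{L}(K)}|\vx|^{q-n}\diff\vx<k\int_{K}|\vx|^{q-n}\diff\vx$ into the equivalent
\[
\int_{K|L}\langle\vy,\nabla J(\vy)\rangle\diff\vy<0.
\]

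\textbf{Main obstacle.} Establishing this strict negativity is where the hypothesis $K=-K$ enters essentially. For any $\vz\in S(r\va)$, symmetry yields $-r\va-\vz\in K$, so by convexity $t(r\va+\vz)\in K$ for all $t\in[-1,1]$; picking $t=r_{1}/r_{2}$ gives $(r_{1}/r_{2})S(r_{2}\va)\subseteq S(r_{1}\va)$ for $0<r_{1}<r_{2}$, so the rescaled slices $(1/r)S(r\va)$ form a decreasing family in $r>0$. Writing $J(r\va)=r^{q-k}G(r)$ with $G(r):=\int_{(1/r)S(r\va)}(1+|\vw|^{2})^{(q-n)/2}\diff\vw$, it follows that $G$ is monotonically decreasing. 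In the easy regime $q\le k$ both factors are non-increasing, giving $\partial_{r}J(r\va)\le 0$ pointwise; the strict inequality then follows since $L$ being proper prevents $K$ from being a cylinder over $L$. The decisive regime $k<q$ is harder: the factor $r^{q-k}$ is increasing and competes with the decrease of $G$, so one must verify a quantitative lower bound on $-G'(r)/G(r)$ overpowering $(q-k)/r$ along every ray---a fact I would extract via a sharp Brunn--Minkowski-type estimate applied to the symmetric slice family $\{S(r\va)\}$ and the weight $(1+|\vw|^{2})^{(q-n)/2}$, exploiting that $K|L$ is itself origin-symmetric. A routine polytope approximation together with the continuity of $\dcura_{q}$ in $K$ justifies the differentiations required by the divergence-theorem step.
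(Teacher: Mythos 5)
First, a point of orientation: this theorem is quoted from \cite{BoroczkyHenkPollehn:2017} and is not proved anywhere in the present paper, so there is no internal argument to compare yours against; I can only judge the proposal on its own terms. Your reduction is reasonable in spirit: Lemma \ref{lem:curvature_measure_euclidean_coordinates}, Fubini over $K|L$, and the cone-volume identity $\rho_{K|L}(\vu)^k\diff\mathcal H^{k-1}(\vu)=\langle\vy,\nu(\vy)\rangle\diff\mathcal H^{k-1}(\vy)$ do convert the claim into $\int_{K|L}\langle\vy,\nabla J(\vy)\rangle\diff\vy<0$, and the containment $(r_1/r_2)S(r_2\va)\subseteq S(r_1\va)$ you derive from $K=-K$ plus convexity is correct and is the right structural input. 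The ``$<1$'' part also goes through, although the step from ``not all normals lie in $L$'' to ``$K\setminus C_L(K)$ has positive measure'' needs an actual argument (e.g., that if the radial cone over $\nu_K^{-1}(\Sph^{n-1}\cap L)$ exhausted $K$, then $\rho_K(\vv)\vv$ for $\vv\in L^\perp$ would have a normal $\vw\in L$, forcing $h_K(\vw)=\langle\vw,\rho_K(\vv)\vv\rangle=0$, contradicting $\vnull\in\inte K$).

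The genuine gap is that the decisive case $k=\dim L<q$ is not proved; it is only announced. What your symmetry-plus-convexity containment yields is exactly that $G(r)=r^{k-q}J(r\va)$ is non-increasing, i.e. $J(r_1\va)\geq(r_1/r_2)^{q-k}J(r_2\va)$, which for $q>k$ is strictly weaker than the pointwise monotonicity $\partial_r J(r\va)<0$ (equivalently $-G'(r)/G(r)>(q-k)/r$) that your argument requires. That stronger inequality is precisely where the whole content of the theorem sits, and ``a sharp Brunn--Minkowski-type estimate applied to the symmetric slice family'' is not a proof of it: one must control the outward motion of the slice boundary against the decay of the weight $(1+|\vw|^2)^{(q-n)/2}$, and nothing in the proposal does this. (For $n=2$, $k=1$ one can in fact verify the pointwise claim using the two-sided bound $w_+'(t)\leq m(t)/t\leq w_-'(t)$ for the slice endpoints, $m$ the midpoint, which follows from concavity/convexity plus $w_\pm(-t)=-w_\mp(t)$; but in higher codimension the analogous boundary-motion estimate is a nontrivial claim that must be established, not asserted.) Two further unresolved issues: $J$ need not be differentiable, and your fallback of polytope approximation only preserves non-strict inequalities, so the strict bound --- which is part of the statement --- would still require a separate equality analysis; and when $q\leq k$ you must switch to proving the ``$<1$'' branch rather than the divergence-theorem branch, since $J(\vnull)=\infty$ there. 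As it stands the proposal is a plausible reduction with the core inequality missing.
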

The case $q<0$ (including uniqueness) was completely settled by
Zhao~\cite{Zhao:2017a}. In particular, he proved that  there is no (non-trivial) subspace
concentration 
\begin{theorem}[\cite{Zhao:2017a}] 
Let $q<0$. A non-zero, finite,  even Borel measure $\mu$ on
	$\Sph^{n-1}$ is the $q$th dual curvature measure of a convex
        body $K\in\Kon$ if and only if $\mu$ is not concentrated on any closed hemisphere.
Moreover, $K\in\Kon$ is
uniquely determined.
\end{theorem}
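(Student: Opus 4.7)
The plan is to separate necessity, existence, and uniqueness, and throughout to exploit that for $q<0$ the integrand $\rho_K^q$ in \eqref{eq:dual_curvature_measure} is singular as $\rho_K\to 0$. This singularity prevents bodies of fixed dual quermassintegral from getting flat and enforces strict monotonicity under radial dilation, both features absent in the range $q\in(0,n]$ of Theorems~\ref{thm:subspace_mass0}--\ref{thm:subspace_mass} and jointly explaining why only the trivial hemisphere obstruction remains.

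For necessity, suppose $\mu=\dcura_q(K,\cdot)$ with $K\in\Kon$ and assume for contradiction that $\mu$ is concentrated on a closed hemisphere $H^+(\vv)=\{\vu\in\Sph^{n-1}:\ip{\vu}{\vv}\geq 0\}$. Then $\mu(H^-(\vv))=0$ with $H^-(\vv)$ the opposite open hemisphere, so \eqref{eq:dual_curvature_measure} combined with $\rho_K>0$ forces $\mathcal{H}^{n-1}(\alpha_K^{\ast}(H^-(\vv)))=0$. But since $o\in\inte K$, a neighborhood of the extremal boundary point of $K$ in direction $-\vv$ has outer unit normal in $H^-(\vv)$ and hence carries positive $(n-1)$-dimensional surface measure; the bi-Lipschitz radial map $p\mapsto p/|p|:\partial K\to\Sph^{n-1}$ sends this set into $\alpha_K^{\ast}(H^-(\vv))$, a contradiction.

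For sufficiency I would follow the variational scheme of \cite[Sects.~4--5]{Huang:2016}. With $|\mu|=\mu(\Sph^{n-1})$, consider on $\Ken$ the scale-invariant functional
\[
\Phi(Q)=\int_{\Sph^{n-1}}\log h_Q(\vu)\diff\mu(\vu)-\frac{|\mu|}{q}\log\dW_{n-q}(Q).
\]
The Huang--Lutwak--Yang--Zhang variational formula $\frac{d}{ds}\dW_{n-q}(Q_s)\big|_{s=0}=q\int f\diff\dcura_q(Q,\cdot)$ for logarithmic Wulff perturbations $\log h_{Q_s}=\log h_Q+sf+O(s^2)$ shows that the Euler--Lagrange equation $\delta\Phi=0$ is equivalent to $\dcura_q(Q,\cdot)=(\dW_{n-q}(Q)/|\mu|)\,\mu$, and the homothety $Q\mapsto tQ$ with $t^q=|\mu|/\dW_{n-q}(Q)$ then yields $\dcura_q(tQ,\cdot)=\mu$. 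Hence it suffices to produce a minimizer. For coercivity I normalize $\dW_{n-q}(Q_j)=|\mu|$: if $h_{Q_j}(\vv_j)\to 0$ with $\vv_j\to\vv$, the body is squeezed into an ever thinner slab perpendicular to $\vv_j$, so $\rho_{Q_j}^q$ blows up off the equator $\vv_j^\perp$, violating $\dW_{n-q}(Q_j)=|\mu|$; this yields a uniform inradius bound $Q_j\supset\epsilon_0 B_n$. Conversely, elongation $h_{Q_j}(\vu_0)=R_j\to\infty$ forces $h_{Q_j}(\vu)\geq R_j|\ip{\vu}{\vu_0}|$, and the non-hemisphere hypothesis provides positive $\mu$-mass on each half-space $\{\vu:\ip{\vu}{\vu_0}>\delta\}$, so $\int\log h_{Q_j}\diff\mu\to+\infty$ and $\Phi(Q_j)\to+\infty$. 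Standard lower semicontinuity of $\Phi$ under Hausdorff convergence then delivers a minimizer $K_0\in\Ken$.

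For uniqueness I would use the maximum principle for the Monge--Amp\`ere equation~\eqref{Monge-Ampere}. Let $h_j=h_{K_j}$ solve $L(h_j)=g$, where $L$ denotes the nonlinear operator on the left side of~\eqref{Monge-Ampere} (which, as a direct computation shows, is $q$-homogeneous in $h$) and $g$ is the density of $\mu$. Set $s=\sup_{\Sph^{n-1}}h_1/h_2\geq 1$ (after possibly exchanging $K_1$ and $K_2$), and let $\vu_\ast$ be a maximizer. At $\vu_\ast$ the functions $h_1$ and $sh_2$ agree to first order, and the Hessian ordering $[s h_{2,ij}+\delta_{ij}sh_2]\succeq[h_{1,ij}+\delta_{ij}h_1]$ gives $L(sh_2)(\vu_\ast)\geq L(h_1)(\vu_\ast)$, i.e.~$s^q g(\vu_\ast)\geq g(\vu_\ast)$. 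Thus $s^q\geq 1$, while $q<0$ and $s\geq 1$ force $s^q\leq 1$, so $s=1$; the symmetric argument then yields $K_1=K_2$. The main obstacle in executing the plan is the coercivity step: the negative sign of $q$ is indispensable both for the inradius bound and for converting the non-hemisphere hypothesis on $\mu$ into divergence of $\int\log h_{Q_j}\diff\mu$ along unbounded sequences, and making this quantitative is where the real work lies.
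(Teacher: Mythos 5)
This theorem is not proved in the paper you were given: it is quoted verbatim from Zhao \cite{Zhao:2017a} as background, so the comparison has to be with Zhao's argument rather than with anything in the present text. Measured against that, your necessity step is essentially correct (the cleanest formulation: the surface area measure of any $K\in\Kon$ is not concentrated on a closed hemisphere, so the boundary points with outer normal in the open hemisphere opposite to $\vv$ form a set of positive $\mathcal{H}^{n-1}$-measure whose radial image lies in $\alpha_K^\ast$ of that hemisphere), and your existence sketch follows the correct HLYZ/Zhao variational route, correctly isolating the two places where $q<0$ enters: the normalization $\dW_{n-q}(Q_j)=|\mu|$ rules out thin and small bodies because $\rho^q\to\infty$, and the hemisphere hypothesis turns elongation into divergence of $\int\log h_{Q_j}\diff\mu$. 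What you gloss over there is standard but substantive: one needs the one-sided variational formula for $\dW_{n-q}$ along logarithmic Wulff perturbations in the range $q<0$, and one must show that the minimizer equals the Wulff shape of its own support function (i.e.\ that $h_{K_0}$ cannot be decreased on a set of positive $\mu$-measure without changing the body); without that, stationarity only yields an inequality between $\mu$ and $\dcura_q(K_0,\cdot)$.

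The genuine gap is in uniqueness. Your argument presupposes that $\mu$ has a density $g$ and that $h_1,h_2$ are classical $C^2$ solutions of \eqref{Monge-Ampere}, so that you may evaluate the operator pointwise at a contact direction $\vu_\ast$ and compare Hessians there. The theorem asserts uniqueness for an arbitrary finite Borel measure, where \eqref{Monge-Ampere} holds only in the weak (measure) sense, support functions need not be twice differentiable anywhere relevant, and no pointwise contact argument is available; moreover, even in the smooth case your conclusion $s^q\geq 1$ collapses when $g(\vu_\ast)=0$. Zhao's proof is precisely the measure-theoretic version of your comparison: setting $s=\max_{\Sph^{n-1}} h_1/h_2$ and letting $\eta$ be the (closed) contact set where the maximum is attained, one shows $K_1\subseteq sK_2$ with the corresponding boundary pieces touching along the normal image of $\eta$, deduces a containment of reverse radial Gauss images and an inequality $\rho_{K_1}\le \rho_{sK_2}$ there, and compares $\dcura_q(K_1,\eta)$ with $\dcura_q(sK_2,\eta)=s^q\dcura_q(K_2,\eta)$, using that $q<0$ reverses the monotonicity of $\rho^q$ and that $\dcura_q(K_2,\eta)>0$. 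Making that comparison work for sets rather than points is the actual content of the uniqueness half, and it is missing from your proposal.
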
 

Our main result treats the range $q\geq n+1$ and here again we have
non-trivial subspace concentration bounds on dual curvature measures
of origin-symmetric convex bodies. 
\begin{theorem}
	Let $K\in\Ken$, $q\geq n+1$, and let $L\subset \R^n$ be a proper subspace of $\R^n$. Then we have 
	\begin{equation}
		\frac{\dcura_q(K,\Sph^{n-1}\cap L)}{\dcura_q(K,\Sph^{n-1})} < \frac{q-n+\dim L}{q}.
		\label{eq:subspace_bound}
	\end{equation}
	\label{thm:subspace_bound}
\end{theorem}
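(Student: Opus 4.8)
The plan is to reduce the inequality to a concrete analytic estimate on the sphere, coordinatized by the subspace $L$ and its orthogonal complement, and then to exploit convexity of $K$ together with the explicit integral formula \eqref{eq:dual_curvature_measure}. Write $\dim L = j$, so $1 \le j \le n-1$, and split $\R^n = L \oplus L^\perp$ with $\dim L^\perp = n-j$. The quantity $\dcura_q(K,\Sph^{n-1}\cap L)$ counts, via the map $\alpha_K^\ast$, the $\rho_K$-weighted measure of those directions $\vu$ whose radial boundary point $\rho_K(\vu)\vu$ has an outer normal lying in $L$. Since $K$ is origin-symmetric, $K\cap L$ is a full-dimensional symmetric body in $L$, and the boundary points of $K$ with normal in $\Sph^{n-1}\cap L$ are exactly the boundary points of the cylinder-like region over $K\cap L$; more precisely, $\nu_K^{-1}(\Sph^{n-1}\cap L)$ projects into $\bd_L(K\cap L)$. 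The first step is therefore to identify $\alpha_K^\ast(\Sph^{n-1}\cap L)$ with (the radial preimage of) a subset of $\bd(K)$ lying "over" $\bd_L(K\cap L)$, and to set up a Fubini-type decomposition of $\int_{\Sph^{n-1}} \rho_K^q \diff \mathcal H^{n-1}$ that separates the $L$-directions from the $L^\perp$-directions.

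The heart of the argument will be a pointwise/fiberwise comparison. After the reduction, one is comparing, for each direction, the contribution of a "thin slab near $L$" to the contribution of the whole sphere, and the factor $\frac{q-n+j}{q}$ is exactly what one gets by integrating the weight $\rho_K^q$ against a measure that, near $L$, behaves like $t^{\,q-n+j-1}$ in the transverse variable $t$ (the extra $q-n$ coming from the exponent in \eqref{eq:dual_curvature_measure} exceeding the Jacobian exponent $n-1$, and the $+j$ from the $j$ free directions inside $L$). So the second step is to bound $\rho_K(\vu)$ from above on the $L$-portion by the radial function of $K\cap L$ (which is immediate, since $K\cap L \subseteq K$ forces $\rho_{K\cap L}\le \rho_K$ on $L$, but I will need the reverse-type control coming from the normal condition), and from above on the complementary portion by the radial function of $K$ restricted so that the transverse coordinate is bounded away from zero. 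Combining these with the homogeneity degree $q$ and carrying out the transverse integral yields the claimed ratio, with strict inequality because a convex body cannot be simultaneously a cylinder over $K\cap L$ and realize equality in the slab estimate on a set of full measure.

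The main obstacle, as with the $q\le n$ case of \cite{BoroczkyHenkPollehn:2017}, will be handling the set $\alpha_K^\ast(\Sph^{n-1}\cap L)$ cleanly: the map $\alpha_K^\ast$ is only defined up to sets of $\mathcal H^{n-1}$-measure zero, and one must control the measure of directions $\vu$ whose boundary point has a normal \emph{close} to $L$ but not in $L$, since these are the directions that actually carry the mass as the slab shrinks. I expect to handle this by working with the intermediate slabs $L_\epsilon = \{\vx : \dist(\vx, L)\le \epsilon |\vx|\}$, estimating $\dcura_q(K, \Sph^{n-1}\cap L_\epsilon)$ by the convexity/homogeneity bounds above uniformly in $\epsilon$, and letting $\epsilon \downarrow 0$; the monotone convergence $\dcura_q(K,\Sph^{n-1}\cap L_\epsilon)\to \dcura_q(K,\Sph^{n-1}\cap L)$ then transfers the estimate. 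A secondary technical point will be verifying strictness: equality would force the support function of $K$ to be affine in the $L^\perp$-directions on a full-measure set of normals, i.e.\ $K$ to be a (generalized) cylinder $ (K\cap L)\times C$, and for such a cylinder a direct computation shows the ratio is \emph{strictly} below $\frac{q-n+j}{q}$ when $q\ge n+1$, giving the contradiction that closes the proof.
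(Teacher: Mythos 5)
Your outline has the right general shape---a Fubini decomposition adapted to $L$, followed by radial integration whose homogeneity exponents ($q$ versus $q-n+\dim L$) produce the ratio $\frac{q-n+\dim L}{q}$---but it has a genuine gap at exactly the point where the paper's proof does its real work. The step you describe as "the heart of the argument," a "pointwise/fiberwise comparison" bounding the contribution of a general fiber of $K$ by the contribution of the extreme fiber, is asserted rather than proved. In the paper this comparison is the content of Corollary \ref{cor: dual_curvature_measure_combination} (derived from Theorem \ref{thm:dual_curvature_measure_combination}, a Brunn--Minkowski-type inequality for the moments $\int_{K}|\vx|^{p}\diff\mathcal{H}^k$ under the combination $(1-\lambda)K+\lambda(-K)$), applied with $p=q-n$ to the fibers $K\cap(\vy+L^\perp)$: convexity and origin-symmetry give $M_{\ov{\vy}}\cap(\vy+L^\perp)\supseteq \frac{1+\tau}{2}K_{\ov{\vy}}+\frac{1-\tau}{2}(-K_{\ov{\vy}})$ with $\tau=\rho_{K|L}(\vy)^{-1}$, and the moment inequality then yields the factor $\tau^{q-n}$. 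That inequality requires $p\geq 1$, i.e.\ $q\geq n+1$, and the paper shows it genuinely fails for $0<p<1$; this is precisely why the range $q\in(n,n+1)$ is left open. Your sketch never identifies where $q\geq n+1$ (rather than just $q>n$) enters, which is a reliable sign that the decisive estimate is missing: as written, your argument would "prove" the theorem for all $q>n$ by a soft slab-and-limit argument, which cannot be right given that even the authors could only handle $(n,n+1)$ for parallelotopes and in the plane.

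There is also a concrete error in your setup: you claim that $\nu_K^{-1}(\Sph^{n-1}\cap L)$ projects into the boundary of the \emph{section} $K\cap L$. A boundary point $\vx$ of $K$ with outer normal $\vu\in L$ satisfies $\ip{\vx|L}{\vu}=h_K(\vu)=h_{K|L}(\vu)$, so it projects onto the boundary of the \emph{projection} $K|L$, not of $K\cap L$; the two differ in general even for origin-symmetric bodies. The correct parametrization (used in the paper) is over $\vy\in K|L$ with $\ov{\vy}=\rho_{K|L}(\vy)\vy\in\partial(K|L)$, and the radial preimage of $\alpha_K^\ast(\Sph^{n-1}\cap L)$ is the union of the cones $\conv\{\vnull,K_{\vv}\}$ over $\vv\in\partial(K|L)$. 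Finally, your strictness argument (equality forces $K$ to be a cylinder, then compute) is not how strictness is obtained: in the paper strict inequality comes from the equality characterization of Corollary \ref{cor: dual_curvature_measure_combination}, which shows the fiberwise estimate is strict for every $\vy$ in the relative interior of $K|L$, a set of full measure; the cylinders $(lB_k)\times B_{n-k}$ only attain the bound asymptotically. To repair your proposal you would need to state and prove the moment inequality of Theorem \ref{thm:dual_curvature_measure_combination} (or an equivalent fiberwise estimate), and correct the section/projection confusion.
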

This bound is also optimal.
\begin{proposition}
	Let $q>n$ and $k\in\{1,\ldots,n-1\}$. There exists a sequence of convex bodies $K_l\in\Ken$, $l\in\N$, and a $k$-dimensional subspace $L\subset\R^n$ such that
	\begin{equation}
		\lim_{l\to\infty} \frac{\dcura_q(K_l,\Sph^{n-1}\cap L)}{\dcura_q(K_l,\Sph^{n-1})}=\frac{q-n+k}{q}.
	\end{equation}
	\label{prop:bound_opt}
\end{proposition}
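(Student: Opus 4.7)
The plan is to take $K_l$ as the orthogonal product of a $k$-dimensional ball of large radius in $L$ and an $(n-k)$-dimensional ball of small radius in $L^\perp$, and to evaluate the resulting dual curvature measures in spherical coordinates adapted to the decomposition $\R^n = L \oplus L^\perp$. Writing $B_L := B_n \cap L$ and $B_{L^\perp} := B_n \cap L^\perp$, set
\begin{equation*}
K_l := l\, B_L + \tfrac{1}{l}\, B_{L^\perp} \in \Ken.
\end{equation*}
Since $L \perp L^\perp$, this is the orthogonal product $l B_L \times \tfrac{1}{l} B_{L^\perp}$. Its boundary decomposes into a lateral part (where $|x_L| = l$ and $|x_{L^\perp}| < 1/l$, with regular outer normals in $L$), two caps (where $|x_L| < l$ and $|x_{L^\perp}| = 1/l$, with outer normals in $L^\perp$), and an $(n-2)$-dimensional edge of $\mathcal{H}^{n-1}$-measure zero.

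Parametrise $u \in \Sph^{n-1}$ via $u = \cos\theta\, v_L + \sin\theta\, v_{L^\perp}$ with $v_L \in \Sph^{k-1}\subset L$, $v_{L^\perp} \in \Sph^{n-k-1}\subset L^\perp$, $\theta \in [0, \pi/2]$, so that $\diff\mathcal{H}^{n-1}(u) = (\cos\theta)^{k-1}(\sin\theta)^{n-k-1}\diff\theta \diff v_L \diff v_{L^\perp}$. A direct calculation shows that the ray from the origin in direction $u$ meets the lateral part of $\partial K_l$ exactly when $\tan\theta < 1/l^2$, with $\rho_{K_l}(u) = l/\cos\theta$; otherwise it meets a cap, with $\rho_{K_l}(u) = 1/(l \sin\theta)$. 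Rays hitting the edge form a set of measure zero in $\Sph^{n-1}$. Since the outer normal at a regular boundary point lies in either $L$ or $L^\perp$, the measure $\dcura_q(K_l, \cdot)$ is supported on $(\Sph^{n-1}\cap L) \cup (\Sph^{n-1}\cap L^\perp)$, and formula \eqref{eq:dual_curvature_measure} yields
\begin{align*}
\dcura_q(K_l, \Sph^{n-1}\cap L) &= \frac{|\Sph^{k-1}|\,|\Sph^{n-k-1}|\, l^{q}}{n} \int_{0}^{\arctan(1/l^2)}\! (\cos\theta)^{k-1-q} (\sin\theta)^{n-k-1} \diff\theta, \\
\dcura_q(K_l, \Sph^{n-1}\cap L^\perp) &= \frac{|\Sph^{k-1}|\,|\Sph^{n-k-1}|}{n\, l^{q}} \int_{\arctan(1/l^2)}^{\pi/2}\! (\cos\theta)^{k-1} (\sin\theta)^{n-k-1-q} \diff\theta,
\end{align*}
and these sum to $\dcura_q(K_l, \Sph^{n-1})$.

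The first integral is concentrated near $\theta = 0$ and is asymptotic to $\int_0^{1/l^2} \theta^{n-k-1}\diff\theta = \frac{1}{n-k}\,l^{-2(n-k)}$. In the second integral, $q > n$ forces $n-k-1-q < -1$, so the integrand diverges like $\theta^{n-k-1-q}$ at the lower endpoint; isolating this singular piece produces the asymptotic $\frac{1}{q+k-n}\,l^{2(q+k-n)}$, the remaining contribution away from $\theta = 0$ being uniformly bounded in $l$. Both dual curvature measures are therefore of order $l^{q-2(n-k)}$ with leading constants $\tfrac{|\Sph^{k-1}||\Sph^{n-k-1}|}{n(n-k)}$ and $\tfrac{|\Sph^{k-1}||\Sph^{n-k-1}|}{n(q+k-n)}$ respectively, whence
\begin{equation*}
\lim_{l \to \infty}\frac{\dcura_q(K_l, \Sph^{n-1}\cap L)}{\dcura_q(K_l, \Sph^{n-1})} = \frac{1/(n-k)}{1/(n-k) + 1/(q+k-n)} = \frac{q-n+k}{q}.
\end{equation*}

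The only delicate step is the endpoint analysis of the second integral, where one must verify that the singular contribution at $\theta \approx 1/l^2$ dominates everything else; but since the integrand is smooth and bounded away from $\theta = 0$, this is a routine estimate.
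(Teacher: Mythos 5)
Your proposal is correct and uses essentially the same extremal family as the paper: your $l\,B_L \times \tfrac1l B_{L^\perp}$ is, by homogeneity of $\dcura_q$, just the paper's cylinder $l\,B_k\times B_{n-k}$ reparametrised, and your asymptotic constants $\tfrac{1}{n-k}$ and $\tfrac{1}{q+k-n}$ reproduce the paper's ratio $\tfrac{q-n+k}{q}$. The only (cosmetic) difference is that you evaluate $\dcura_q$ directly from its defining spherical integral of $\rho_{K_l}^q$ in bispherical coordinates, whereas the paper passes to the solid-integral representation of Lemma~\ref{lem:curvature_measure_euclidean_coordinates} and uses cylindrical coordinates with monotone convergence; both routes are equally valid and your endpoint analysis is indeed routine.
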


Unfortunately, our approach can not cover the missing range
$q\in(n,n+1)$. One reason is that the following Brunn-Minkowski-type
inequality \eqref{eq:dual_curvature_measure_combination} for moments of the Euclidean norm  which might be of some interest in its
own does not hold  in general
for $0<p<1$. The proof of  Theorem \ref{thm:subspace_bound} heavily relies on this inequality.  

\begin{theorem}
	\label{thm:dual_curvature_measure_combination}
	Let $K_0,K_1\in\Kn$, with $\dim K_0=\dim K_1=k\geq 1$, 
        $\vol_k(K_0)=\vol_k(K_1)$ and their
        affine hulls are parallel. For
        $\lambda\in[0,1]$ let  $K_\lambda=(1-\lambda)K_0+\lambda K_1$.
         Then for $p\geq 1$ 
	\begin{equation}
		\label{eq:dual_curvature_measure_combination}
		\begin{split}
			\int_{K_\lambda} |\vx|^p &\diff\mathcal{H}^k(\vx)+\int_{K_{1-\lambda}} |\vx|^p \diff\mathcal{H}^k(\vx)\\
			&\geq |2\lambda-1|^p \left( \int_{K_0} |\vx|^p \diff\mathcal{H}^k(\vx) +\int_{K_1} |\vx|^p \diff\mathcal{H}^k(\vx) \right)
		\end{split}
	\end{equation}
	with equality if and only if $\lambda\in\{0,1\}$ or $p=1$  and
there exists a $\vu\in\Sph^{n-1}$ such that $K_0, K_1\subset \lin\vu$
and the hyperplane $\{\vx\in\R^n : \ip{\vu}{\vx}=0\}$ separates $K_\lambda$ and $K_{1-\lambda}$.
\end{theorem}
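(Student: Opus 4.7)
The plan is to reduce the integral inequality to a pointwise vector bound via a volume-preserving transport map between $K_0$ and $K_1$. First I would invoke Brenier's theorem within the common $k$-dimensional linear subspace $H$ parallel to $\aff K_0$ and $\aff K_1$ to produce an $\mathcal{H}^k$-preserving bijection $T\colon K_0\to K_1$ whose differential $DT$ (taken on $H$) is symmetric positive semidefinite with $\det DT=1$ a.e. Setting $\Psi_t(\vx)=(1-t)\vx+t\,T(\vx)\in K_t$, Minkowski's determinant inequality for positive semidefinite matrices yields
\[
\det D\Psi_t=\det\bigl((1-t)I+t\,DT\bigr)\geq\bigl((1-t)+t\bigr)^k=1 \quad\text{a.e.}
\]
Combining this with $\Psi_t(K_0)\subseteq K_t$, a change of variables gives $\int_{K_t}|\vz|^p\diff\mathcal{H}^k(\vz)\geq\int_{K_0}|\Psi_t(\vx)|^p\diff\mathcal{H}^k(\vx)$. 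Applying this with $t=\lambda$ and $t=1-\lambda$, summing, and using the volume preservation of $T$ to rewrite $\int_{K_0}|T(\vx)|^p\diff\mathcal{H}^k(\vx)=\int_{K_1}|\vy|^p\diff\mathcal{H}^k(\vy)$, the task reduces to the pointwise inequality
\[
|(1-\lambda)\va+\lambda\vb|^p+|\lambda\va+(1-\lambda)\vb|^p\geq|1-2\lambda|^p\bigl(|\va|^p+|\vb|^p\bigr)
\]
for all $\va,\vb\in\R^n$, $\lambda\in[0,1]$, $p\geq 1$, taken with $\va=\vx$ and $\vb=T(\vx)$.

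For this pointwise inequality I would substitute $\vu=(\va+\vb)/2$, $\vv=(\va-\vb)/2$, $\gamma=|1-2\lambda|\in[0,1]$, which rewrites the claim as $|\vu+\gamma\vv|^p+|\vu-\gamma\vv|^p\geq\gamma^p(|\vu+\vv|^p+|\vu-\vv|^p)$. Introducing $g(t)=|t\vu+\vv|^p+|t\vu-\vv|^p$, a direct second-derivative computation combined with Cauchy--Schwarz shows $t\mapsto|t\vu\pm\vv|^p$ is convex whenever $p\geq 1$, so $g$ is convex; and $g$ is even by inspection. Any convex even function on $\R$ attains its minimum at $0$ and is non-decreasing on $[0,\infty)$. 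Since $|\vu+\gamma\vv|^p+|\vu-\gamma\vv|^p=\gamma^p g(1/\gamma)$ for $\gamma>0$ and $1/\gamma\geq 1$, the required bound $g(1/\gamma)\geq g(1)$ is immediate; the case $\gamma=0$ is trivial.

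For equality with $\lambda\in(0,1)$ I would trace each step backwards. Tightness of Minkowski's determinant inequality forces $DT=I$ a.e., so $T$ is a translation $\vx\mapsto\vx+\vt$ and $K_1=K_0+\vt$; in particular $\vu=\vx+\vt/2$ and $\vv=-\vt/2$. Pointwise equality a.e.\ then requires $g(1/\gamma)=g(1)$ for a.e.\ $\vx\in K_0$, which by convexity plus evenness of $g$ amounts to $g$ being constant on $[-1/\gamma,1/\gamma]$. For $p>1$ this would require $\vu=\vnull$, a single-point condition excluded a.e. For $p=1$ it forces $\vu\parallel\vv$, i.e., $\vx\in\lin\vt$ for a.e. $\vx\in K_0$, hence $K_0,K_1\subset\lin\vu$ with $\vu=\vt/|\vt|$; tracking the exact length of the constant region of $g$ yields precisely the condition that the hyperplane $\vu^\perp$ separates $K_\lambda$ from $K_{1-\lambda}$. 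Conversely, a direct one-dimensional computation confirms equality in this configuration.

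The main obstacle is the pointwise inequality itself: its proof is compact once one isolates the convex-plus-even structure of $g$, but it crucially uses $p\geq 1$, since for $p<1$ the map $g$ is no longer convex and the argument collapses---consistent with the remark following the theorem statement.
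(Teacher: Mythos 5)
Your proof is correct in substance but follows a genuinely different route from the paper. The paper first linearizes the norm via Alesker's identity $|\vx|^p=c\int_{\Sph^{n-1}}|\ip{\vx}{\vtheta}|^p\diff\mathcal{H}^{n-1}(\vtheta)$, which reduces everything to one-dimensional marginals; it then runs the Kneser--S\"uss parametrization (inverse distribution functions $z_i(\tau)$ of the sections $v_i$) and finishes with a scalar inequality proved by Karamata's majorization theorem. You instead transport $K_0$ to $K_1$ by a Brenier map, use Minkowski's determinant inequality to get $\det D\Psi_t\geq 1$, and reduce to the \emph{vector} inequality
\begin{equation}
|(1-\lambda)\va+\lambda\vb|^p+|\lambda\va+(1-\lambda)\vb|^p\geq|2\lambda-1|^p\bigl(|\va|^p+|\vb|^p\bigr),
\end{equation}
which you prove cleanly via the convex even function $g(t)=|t\vu+\vv|^p+|t\vu-\vv|^p$; note that the $n=1$ case of this is exactly the paper's Lemma~\ref{lem:inequality_karamata}, so your $g$-argument doubles as an alternative, arguably slicker, proof of that lemma. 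What each approach buys: yours stays in $\R^n$ throughout and is more conceptual, but it imports the full optimal-transport toolkit (existence of the Brenier map, the a.e.\ Monge--Amp\`ere equation, and the Aleksandrov-Jacobian change-of-variables inequality), whereas the paper's argument is elementary and self-contained modulo Alesker's identity. The one step you should flesh out is the rigidity claim in the equality case: $D^2_A\varphi=I$ a.e.\ does not by itself make $\varphi$ quadratic, since the distributional Hessian of a convex function may carry a singular part; you either need Caffarelli's interior regularity for the Brenier map between convex bodies with constant densities, or you should argue as the paper does, namely deduce $\vol_k(K_\lambda)=\vol_k(K_0)$ from the equality chain and invoke the classical Brunn--Minkowski equality characterization to conclude that $K_1$ is a translate of $K_0$. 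With that point addressed (and the trivial $\lambda=\tfrac12$ case noted separately, since $1/\gamma$ degenerates there), your argument is complete.
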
 
Here $\lin(\cdot)$ denotes the linear hull operator.
Observe for $p=0$ the inequality also holds true by Brunn-Minkowski
inequality (cf.~\eqref{eq:brunn_minkowski}). We will use the theorem in the special setting $K_0=-K_1$
which then gives 
\begin{corollary}  
\label{cor: dual_curvature_measure_combination}
	Let $K\in\Kn$ with $\dim K=k\geq 1$ and
        let $p\geq 1$. Then for $\lambda\in[0,1]$ 
	\begin{equation}
			\int_{(1-\lambda)K +\lambda(-K)} |\vx|^p \diff\mathcal{H}^k(\vx)\geq
                        |2\lambda-1|^p  \int_{K} |\vx|^p
                        \diff\mathcal{H}^k(\vx), 
	\end{equation}
with equality if and only if  $\lambda\in\{0,1\}$ or $p=1$  and
there exists a $\vu\in\Sph^{n-1}$ such that  $K\subset  \lin\vu$
and the origin is not in the relative interior of the segment
$(1-\lambda)K +\lambda(-K)$.
\end{corollary}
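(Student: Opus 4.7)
The plan is to deduce the corollary from Theorem \ref{thm:dual_curvature_measure_combination} by the specialization $K_0 = K$ and $K_1 = -K$. First I would check the hypotheses of the theorem for this choice: clearly $\dim(-K) = \dim K = k$ and $\vol_k(-K) = \vol_k(K)$, and if $\aff K = U + \vv$ with $U$ a linear subspace, then $\aff(-K) = U - \vv$ is parallel to $\aff K$, so the requirement on affine hulls is met.

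Next I would identify the two convex combinations appearing in Theorem \ref{thm:dual_curvature_measure_combination}. With this choice,
\begin{equation}
K_\lambda = (1-\lambda)K + \lambda(-K), \qquad K_{1-\lambda} = \lambda K + (1-\lambda)(-K) = -K_\lambda.
\end{equation}
Because the integrand $|\vx|^p$ is invariant under $\vx \mapsto -\vx$, a change of variables gives
\begin{equation}
\int_{K_{1-\lambda}} |\vx|^p \diff\mathcal{H}^k(\vx) = \int_{K_\lambda} |\vx|^p \diff\mathcal{H}^k(\vx), \qquad \int_{-K} |\vx|^p \diff\mathcal{H}^k(\vx) = \int_{K} |\vx|^p \diff\mathcal{H}^k(\vx).
\end{equation}
Substituting these two identities into \eqref{eq:dual_curvature_measure_combination} collapses both sides by a factor of two and yields the desired inequality immediately. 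No further estimation is needed — this step is purely bookkeeping — so there is no real obstacle to the inequality itself.

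The remaining task is the equality characterization. By Theorem \ref{thm:dual_curvature_measure_combination}, equality forces either $\lambda \in \{0,1\}$ or $p=1$ together with the existence of $\vu \in \Sph^{n-1}$ with $K_0 = K \subset \lin \vu$ (so in particular $k=1$) and the hyperplane $H = \{\vx : \ip{\vu}{\vx} = 0\}$ separating $K_\lambda$ from $K_{1-\lambda} = -K_\lambda$. I would argue that this separation condition is equivalent to $\vnull \notin \relint K_\lambda$: since $K_\lambda \subset \lin \vu$ is a segment on the line $\lin \vu$, and $-K_\lambda$ is its reflection through $H \cap \lin \vu = \{\vnull\}$, the sets $K_\lambda$ and $-K_\lambda$ lie strictly in opposite closed half-spaces determined by $H$ precisely when $\vnull$ lies in the relative boundary of $K_\lambda$ or outside $K_\lambda$ altogether, i.e., when the origin is not in the relative interior of $(1-\lambda)K + \lambda(-K)$. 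Conversely, any such configuration yields separation by $H$, so the equality case of the corollary follows. This translation between ``separates $K_\lambda$ and $-K_\lambda$'' and ``origin not in $\relint K_\lambda$'' is the only place requiring care, but it reduces to a one-dimensional observation and presents no genuine difficulty.
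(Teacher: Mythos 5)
Your proposal is correct and coincides with the paper's (implicit) derivation: the corollary is obtained exactly by specializing Theorem \ref{thm:dual_curvature_measure_combination} to $K_0=K$, $K_1=-K$, using the evenness of $|\vx|^p$ to halve both sides, and translating the separation condition for the segment $K_\lambda$ and its reflection $-K_\lambda$ into the statement that the origin is not in $\relint K_\lambda$. No gaps.
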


In the planar case we can fill the remaining gap in the range of $q$, i.e., there we will
prove a sharp concentration bound for all $q> 2$.  
\begin{theorem}
	Let $K\in\mathcal{K}_e^2$, $q> 2$, and let $L\subset \R^2$ be a line through the origin. Then we have 
	\begin{equation}
		\frac{\dcura_q(K,\Sph^1\cap L)}{\dcura_q(K,\Sph^1)} < \frac{q-1}{q}.
		\label{eq:subspace_concentration_plane}
	\end{equation}
	\label{thm:subspace_bound_plane}
\end{theorem}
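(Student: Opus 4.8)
The proof proceeds by localizing \eqref{eq:subspace_concentration_plane} to a parallelogram and then to a one–parameter inequality. After a rotation we may assume $L=\R\ve_1$; set $h:=h_K(\ve_1)>0$ and $F:=F(K,\ve_1)=K\cap\{x_1=h\}$. If $F$ is a single point the left–hand side of \eqref{eq:subspace_concentration_plane} vanishes and we are done, so suppose $F=[A,B]$ is a genuine segment and put $G:=[A,-B]$. A polar–coordinate computation (the two antipodal arcs making up $\alpha_K^\ast(\Sph^1\cap L)$ contribute equally, since $\rho_K(-\vu)=\rho_K(\vu)$) gives $\dcura_q(K,\Sph^1\cap L)=q\int_{T}|\vx|^{q-2}\diff\vx$ and $\dcura_q(K,\Sph^1)=\tfrac q2\int_K|\vx|^{q-2}\diff\vx$, where $T:=\conv(\{\vnull\}\cup F)$. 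Since $K=-K$ we have $-F=F(K,-\ve_1)\subseteq K$, so $K$ contains the centrally symmetric parallelogram $P:=\conv(F\cup(-F))$, and it suffices to prove the bound with $K$ replaced by $P$. As $F$ and $-F$ are opposite edges of $P$, its two diagonals cut $P$ into the four triangles $T,-T,T',-T'$ with $T':=\conv(\{\vnull\}\cup G)$, whence $\int_P|\vx|^{q-2}\diff\vx=2\int_T|\vx|^{q-2}\diff\vx+2\int_{T'}|\vx|^{q-2}\diff\vx$, and \eqref{eq:subspace_concentration_plane} becomes
\[
\int_T|\vx|^{q-2}\diff\vx<(q-1)\int_{T'}|\vx|^{q-2}\diff\vx .
\]

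For a segment $E$ one has $\int_{\conv(\{\vnull\}\cup E)}|\vx|^{q-2}\diff\vx=\tfrac1q\dist(\vnull,\aff E)\int_E|\vx|^{q-2}\diff\mathcal H^1$, and $\dist(\vnull,\aff F)\,|F|=\dist(\vnull,\aff G)\,|G|$ since both equal twice the common area of $T$ and $T'$. Hence the inequality above is exactly $\tfrac1{|F|}\int_F|\vx|^{q-2}\diff\mathcal H^1<(q-1)\tfrac1{|G|}\int_G|\vx|^{q-2}\diff\mathcal H^1$, i.e.
\[
\int_0^1|(1-s)A+sB|^{q-2}\diff s<(q-1)\int_0^1|(1-s)A-sB|^{q-2}\diff s .
\]
To prove this I would fix $A$ and $|B|$ and let $\vartheta$ be the angle between $A$ and $B$. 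Differentiating under the integral sign, and using that $\tfrac{d}{d\vartheta}B\perp B$ has length $|B|$ (so that $\langle(1-s)A+sB,\tfrac{d}{d\vartheta}B\rangle=(1-s)\langle A,\tfrac{d}{d\vartheta}B\rangle$), gives
\[
\frac{\partial}{\partial\vartheta}\int_0^1|(1-s)A+sB|^{q-2}\diff s=-(q-2)\,|A|\,|B|\sin\vartheta\int_0^1 s(1-s)\,|(1-s)A+sB|^{q-4}\diff s,
\]
which, as $q>2$, is strictly negative for $\vartheta\in(0,\pi)$ and strictly positive for $\vartheta\in(-\pi,0)$. Thus the average of $|\vx|^{q-2}$ over $[A,B]$ is maximal exactly when $B$ lies on the ray $\R_{\ge0}A$; applied to $-B$ in place of $B$, the same computation shows the average over $[A,-B]$ is minimal in that very configuration. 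Hence it suffices to treat $B=bA$ with $b:=|B|/|A|>0$; there both integrals are elementary, and writing $b=e^{2y}$ the quotient of the two sides equals $\tanh\!\big((q-1)y\big)/\tanh y$.

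It remains to show $\tanh((q-1)y)\le(q-1)\tanh y$ for all $y\in\R$. The odd function $\eta(y):=(q-1)\tanh y-\tanh((q-1)y)$ vanishes at $0$ and satisfies $\eta'(y)=(q-1)\big(\cosh^{-2}y-\cosh^{-2}((q-1)y)\big)>0$ for $y>0$, since $\cosh$ is increasing on $[0,\infty)$ and $q-1>1$; so $\eta\ge0$ on $[0,\infty)$ and $\tanh((q-1)y)/\tanh y< q-1$ for $y\neq0$. Tracing back the strict monotonicities, equality in the whole chain would force $B\in\R_{\ge0}A$ and $|B|=|A|$, i.e.\ $A=B$, which is excluded because $F=[A,B]$ is a genuine segment; hence the inequality is strict and Theorem~\ref{thm:subspace_bound_plane} follows. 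The limiting case $A=B$ (an arbitrarily thin parallelogram) is precisely the extremal sequence of Proposition~\ref{prop:bound_opt}.

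The step I expect to be the real obstacle is the passage from the genuinely two–parameter extremal problem — the local shape of $\partial K$ at the face met by $L^{\perp}$ — to the one–parameter family $B=bA$. For $q\in(n,n+1)=(2,3)$ this cannot be carried out via the Brunn–Minkowski–type inequality of Theorem~\ref{thm:dual_curvature_measure_combination}, because there the weight $|\vx|^{q-2}$ fails to be convex; what rescues the planar case is that "rotating $B$ onto the axis of $A$" is a single rotation, so convexity can be replaced by the sign of a single $\vartheta$–derivative.
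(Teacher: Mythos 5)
Your proof is correct, and although it opens with the same reduction as the paper --- if $F=K\cap H_K(\ve_1)$ is a single point the numerator vanishes, and otherwise an inclusion argument reduces everything to the parallelogram $P=\conv(F\cup(-F))$ --- your treatment of the parallelogram is genuinely different. The paper handles $P$ via Proposition~\ref{prop:subspace_concentration_parallelotopes_lb} and Corollary~\ref{cor:subspace_concentration_parallelotopes_ub}: it slices the prism parallel to its symmetric cross-section and invokes Anderson's theorem on integrals of even quasiconvex functions over translates of a symmetric convex body to get $\dcura_q(P,\{\pm\vu_i\})>\tfrac1q\dcura_q(P,\Sph^{n-1})$ for each pair of facet normals, then sums. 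You instead cut $P$ along its diagonals into $\pm T,\pm T'$, convert the resulting inequality $\int_T|\vx|^{q-2}\diff\vx<(q-1)\int_{T'}|\vx|^{q-2}\diff\vx$ into a comparison of the averages of $|\vx|^{q-2}$ over the edges $[A,B]$ and $[A,-B]$ (using that $T$ and $T'$ have equal area), and settle it by a one-parameter rotation monotonicity together with the elementary inequality $\tanh((q-1)y)\le(q-1)\tanh y$. Your computations check out, including the identity $\int_{\conv(\{\vnull\}\cup E)}|\vx|^{q-2}\diff\vx=\tfrac1q\dist(\vnull,\aff E)\int_E|\vx|^{q-2}\diff\mathcal{H}^1$, the sign of the $\vartheta$-derivative, and the evaluation of the collinear case; the equality analysis correctly isolates the degenerate configuration $A=B$, consistent with the extremal sequence of Proposition~\ref{prop:bound_opt}. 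What the paper's route buys is generality: Anderson's theorem yields the parallelotope bound in every dimension and for all $q>n$, of which the planar theorem is the special case where reduction to a parallelotope is available. What your route buys is self-containedness and extra information: it avoids the external rearrangement inequality, exhibits the extremizers explicitly, and makes transparent why the obstruction blocking $q\in(n,n+1)$ for $n\ge3$ (failure of Theorem~\ref{thm:dual_curvature_measure_combination} for $0<p<1$) is harmless in the plane, where the two-parameter extremal problem collapses to a single rotation.
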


We remark that the logarithmic Minkwoski problem as well as the dual
Minkowski problem are far easier to handle for the special case 
where the measure $\mu$  has a positive continuous density, (where
subspace concentration is trivially satisfied). The singular general
case for measures is substantially more delicate, which involves
measure concentration and requires far more powerful techniques to
solve.


The paper is organized as follows. First we will briefly recall
some basic facts about convex bodies needed in our investigations in
Section 2.  In
Section 3 we will prove Theorem \ref{thm:dual_curvature_measure_combination}, which is one of the main ingredients for the
proof of Theorem~\ref{thm:subspace_bound} given in Section 4 alongside the proof of Proposition~\ref{prop:bound_opt}. Finally,
we discuss the remaining case $q\in(n,n+1)$ in Section 5 and prove Theorem~\ref{thm:subspace_bound_plane}.

\section{Preliminaries}
We recommend the books by Gardner \cite{Gardner:2006}, Gruber \cite{Gruber:2007} and
Schneider \cite{Schneider:1993} as excellent references on  convex geometry.

For a given convex body $K\in\Kn$ the support function $h_K\colon\R^n\to\R$ is defined by
\begin{equation}
h_K(\vx)=\max_{\vy\in K} \langle \vx,\vy\rangle.
\end{equation}
A boundary point $\vx\in\partial K$ is said to have a (not necessarily unique) unit outer normal vector $\vu\in \Sph^{n-1}$ if $\langle \vx,\vu\rangle = h_K(\vu)$. The corresponding supporting hyperplane $\{\vx\in \R^n\colon \langle \vx,\vu\rangle=h_K(\vu)\}$ will be denoted by $H_K(\vu)$. For $K\in\Kon$ the radial function $\rho_K\colon \R^n\setminus\{\vnull\}\to\R$ is given by
\begin{equation}
\rho_K(\vx)=\max\{\rho>0\colon \rho\, \vx\in K\}.
\end{equation}
Note, that the support function and the radial function are
homogeneous of degrees $1$ and $-1$, respectively, i.e., 
\begin{equation} 
h_K(\lambda\,\vx) =\lambda\, h_K(\vx)\text{ and } \rho_K(\lambda\,\vx)=\lambda^{-1}\, \rho_K(\vx),
\end{equation} 
for $\lambda>0$.
We define the reverse radial Gauss image of $\eta\subseteq \Sph^{n-1}$
with respect to a convex body $K\in\Kon$ by
\begin{equation}
\alpha_K^\ast(\eta) = \{\vu\in \Sph^{n-1}\colon \rho_K(\vu)\vu\in H_K(\vv) \text{ for a }\vv\in\eta \}.
\end{equation}
If $\eta$ is a Borel set, then $\alpha_K^\ast(\eta)$ is
$\mathcal{H}^{n-1}$-measurable
(see~\cite[Lemma~2.2.11.]{Schneider:1993}) and so the  $q$th dual
curvature measure given in~\eqref{eq:dual_curvature_measure} is well defined. We will need the following identity.
\begin{lemma}[\protect{\cite[Lemma 2.1]{BoroczkyHenkPollehn:2017}}]
	Let $K\in\Kon$, $q>0$ and $\eta\subseteq \Sph^{n-1}$ a Borel set.
        Then 
	\begin{equation}
	\dcura_q(K,\eta) = \frac{q}{n} \int\limits_{\{\vx\in K : \vx/|\vx|\in\alpha_K^\ast(\eta)\}} |\vx|^{q-n} \diff\mathcal{H}^n(\vx).
	\end{equation}
	\label{lem:curvature_measure_euclidean_coordinates}
\end{lemma}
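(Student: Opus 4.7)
The plan is to derive the identity by rewriting the volume integral on the right in polar coordinates and matching it to the defining spherical integral of $\dcura_q(K,\cdot)$. The statement is essentially a change-of-variables identity, so there is no serious obstacle — the only points requiring care are the measurability of $\alpha_K^\ast(\eta)$ and the description of the integration domain.

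First, I would set $\omega = \alpha_K^\ast(\eta)$, which is $\mathcal{H}^{n-1}$-measurable by the quoted result of Schneider. Because $K\in\Kon$ is star-shaped with respect to the origin, every $\vx\in K\setminus\{\vnull\}$ is of the form $r\vu$ with $\vu=\vx/|\vx|\in\Sph^{n-1}$ and $0\leq r\leq \rho_K(\vu)$. Hence the integration set on the right-hand side admits the clean description
\begin{equation}
\{\vx\in K : \vx/|\vx|\in\omega\} = \{r\vu : \vu\in\omega,\ 0\leq r\leq \rho_K(\vu)\}
\end{equation}
up to a $\mathcal{H}^n$-null set (the origin).

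Next, I would apply the standard polar-coordinate decomposition of Lebesgue measure, namely
\begin{equation}
\int_{\R^n} f(\vx)\diff\mathcal{H}^n(\vx) = \int_{\Sph^{n-1}}\int_0^\infty f(r\vu)\, r^{n-1}\diff r\diff\mathcal{H}^{n-1}(\vu),
\end{equation}
with $f(\vx) = |\vx|^{q-n}\mathbf{1}_A(\vx)$ for $A = \{\vx\in K : \vx/|\vx|\in\omega\}$. This yields
\begin{equation}
\int_A |\vx|^{q-n}\diff\mathcal{H}^n(\vx) = \int_\omega \int_0^{\rho_K(\vu)} r^{q-1}\diff r \diff\mathcal{H}^{n-1}(\vu).
\end{equation}
Since $q>0$, the inner integral evaluates to $\rho_K(\vu)^q/q$.

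Finally, multiplying both sides by $q/n$ and invoking the definition of the dual curvature measure as given in \eqref{eq:dual_curvature_measure}, one obtains
\begin{equation}
\frac{q}{n}\int_A |\vx|^{q-n}\diff\mathcal{H}^n(\vx) = \frac{1}{n}\int_{\alpha_K^\ast(\eta)} \rho_K(\vu)^q \diff\mathcal{H}^{n-1}(\vu) = \dcura_q(K,\eta),
\end{equation}
which is precisely the claimed identity. The whole argument is essentially a one-line Fubini/polar-coordinate computation; the $\mathcal{H}^{n-1}$-measurability of $\alpha_K^\ast(\eta)$ (needed to justify applying Fubini's theorem) is the only non-trivial ingredient, and it is available from the quoted lemma in Schneider's book.
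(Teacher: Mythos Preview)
Your argument is correct: the identity is precisely a polar-coordinate change of variables, and you have carried it out cleanly, including the remark that $q>0$ is needed for the radial integral $\int_0^{\rho_K(\vu)} r^{q-1}\diff r$ to converge and equal $\rho_K(\vu)^q/q$. Note, however, that the present paper does not give its own proof of this lemma; it is quoted from \cite{BoroczkyHenkPollehn:2017} without argument, so there is nothing to compare against here beyond confirming that your derivation is the standard one.
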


Let $L$ be a linear subspace of $\R^n$. We write $K|L$ to denote the image of the
orthogonal projection of $K$ onto $L$  and $L^\perp$ for the subspace
orthogonal to $L$. The linear (affine, convex) hull of a set $S\subseteq \R^n$ is
denoted by $\lin S$ ($\aff S$, $\conv S$), and for $\vv\in\R^n$ we write instead of $(\lin
\vv)^\perp$ just $\vv^\perp$.

As usual, for two subsets  $A, B\subseteq \R^n$ and reals
$\alpha,\beta \geq 0$ the  Minkowski combination is defined by 
\begin{equation}
\alpha A+\beta B = \{\alpha \va+\beta \vb : \va\in A,\vb\in
B\}. 
\end{equation} 

By the well-known Brunn-Minkowski inequality we know that the $n$th root 
of the volume of the Minkowski combination  is a concave function. 
More precisely, for two convex bodies
$K_0,K_1\subset\R^n$ and for $\lambda\in[0,1]$  we have 
\begin{equation}
\vol_n((1-\lambda)K_0+\lambda K_1)^{1/n}\geq (1-\lambda)\vol_n(K_0)^{1/n}+ \lambda \vol_n(K_1)^{1/n},
\label{eq:brunn_minkowski}
\end{equation}
where $\vol_n(\cdot)=\mathcal{H}^n(\cdot)$ denotes the $n$-dimensional
Hausdorff measure.  We have equality in \eqref{eq:brunn_minkowski}
for some $0<\lambda<1$ if and only if $K_0$ and $K_1$ lie in parallel
hyperplanes  or they are homothetic, i.e., there exist a $\vt\in\R^n$ and $\mu\geq 0$ such that $K_1=\vt+\mu\,K_0$ (see, e.g., \cite{Gardner:2002}, \cite[Sect. 6.1]{Schneider:1993}).

\section{A Brunn-Minkowski type inequality for moments of the
  Euclidean norm }
In this section we will prove Theorem 
\ref{thm:dual_curvature_measure_combination} and  
we start by recalling a variant of the well-known Karamata 
inequality which often appears in the context of Schur-convex functions.

\begin{theorem}[Karamata's inequality, see, e.g., \protect{\cite[Theorem 1]{Kadelburg:2005}}]
	Let $D\subseteq\R$ be convex  and let $f\colon D\to\R$ be a
        non-decreasing, convex function. Let $x_1,\ldots,x_k,y_1,\ldots,y_k\in D$ such that
	\begin{enumerate}
		\item $x_1\geq x_2\geq\ldots\geq x_k,$
		\item $y_1\geq y_2\geq\ldots\geq y_k,$
		\item $x_1+x_2+\ldots+x_i \geq y_1+y_2+\ldots+y_i$ for all $i=1,\ldots,k,$
	\end{enumerate}
	then
	\begin{equation}
	\label{eq:karamata}
	f(x_1)+f(x_2)+\ldots+f(x_k)\geq f(y_1)+f(y_2)+\ldots+f(y_k).
	\end{equation}
	If $f$ is strictly convex, then equality in
        \eqref{eq:karamata} holds if and only if $x_i=y_i$,
        $i=1,\ldots,k$.
\label{thm:karamata}          
\end{theorem}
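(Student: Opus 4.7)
\emph{Plan.} The strategy I would use is the classical summation-by-parts argument: bound $f(x_i) - f(y_i)$ below by a linear function of $x_i - y_i$ whose slopes are non-negative and non-increasing in $i$, then rearrange via Abel summation so that the partial sums $S_j = \sum_{i=1}^j (x_i - y_i)$ — which are non-negative by hypothesis (iii) — appear with non-negative weights.

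First, for each $i$ I would pick $c_i$ in the subdifferential $\partial f(y_i)$ of the convex function $f$ (using an appropriate one-sided derivative if $y_i$ is a boundary point of $D$). Since the subdifferential of a convex function on an interval is monotone non-decreasing in its argument, and $y_1 \geq y_2 \geq \ldots \geq y_k$ by hypothesis (ii), the choices can be coordinated so that $c_1 \geq c_2 \geq \ldots \geq c_k$; and since $f$ is non-decreasing, each $c_i \geq 0$. The defining inequality of a subgradient then yields
\begin{equation}
f(x_i) - f(y_i) \geq c_i(x_i - y_i), \quad i = 1, \ldots, k.
\label{eq:plan_sub}
\end{equation}

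Second, set $S_0 = 0$ and $S_j = \sum_{i=1}^{j} (x_i - y_i)$ for $1 \leq j \leq k$, so hypothesis (iii) reads $S_j \geq 0$. Summing \eqref{eq:plan_sub} over $i$ and rearranging by Abel summation,
\begin{equation}
\sum_{i=1}^k \bigl( f(x_i) - f(y_i) \bigr) \geq \sum_{i=1}^k c_i(S_i - S_{i-1}) = c_k S_k + \sum_{i=1}^{k-1} (c_i - c_{i+1}) S_i \geq 0,
\end{equation}
since every factor on the right-hand side is non-negative by construction. This is exactly \eqref{eq:karamata}.

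For the equality claim under strict convexity, the direction ``$x_i = y_i$ for all $i$ $\Rightarrow$ equality'' is immediate. Conversely, strict convexity of $f$ forces \eqref{eq:plan_sub} to be a strict inequality whenever $x_i \neq y_i$, so equality in \eqref{eq:karamata} demands $x_i = y_i$ for every $i$. The only technical nuisance I anticipate is justifying the selection of subgradients at boundary points of $D$ where the subdifferential may be empty, but this is handled cleanly by working with one-sided convex derivatives $f'_{\pm}$; otherwise the proof is elementary and does not require any deeper tools than monotonicity of slopes of a convex function.
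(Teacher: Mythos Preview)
The paper does not prove this theorem; it is quoted as a known result with a reference to \cite{Kadelburg:2005}, so there is no ``paper's own proof'' to compare against. Your Abel-summation argument via subgradients is the standard one and is correct, including the equality case. One minor remark: your worry about boundary points of $D$ is moot here, since the hypothesis that $f$ is non-decreasing forces every one-sided derivative (where defined) to be non-negative and finite, so the choice $c_i = f'_+(y_i)$ (or $f'_-$ at a right endpoint) always works.
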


As a consequence we obtain an estimate for the value of powers of convex combinations of real numbers.

\begin{lemma}
	\label{lem:inequality_karamata}
	Let $p\geq 1$, $z,\bar{z}\in\R$ and $\lambda\in [0,1]$. Then
	\begin{equation}
		|\lambda z+(1-\lambda)\bar{z}|^p+|\lambda \bar{z}+(1-\lambda)z|^p\geq |2\lambda-1|^p (|z|^p+|\bar{z}|^p)
	\end{equation}
	and equality holds if and only if at least one of the following statements is true:
	\begin{enumerate}
		\item $\lambda\in\{0,1\}$,
		\item $\bar{z}=-z$,
		\item $p=1$, $z\bar{z}<0$ and $\max\{\lambda,1-\lambda\}\geq \frac{\max\{|z|,|\bar{z}|\}}{|z|+|\bar{z}|}$.
		\label{itm:karamata_equality_p_1}
	\end{enumerate}
\end{lemma}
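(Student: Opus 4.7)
The plan is to apply Karamata's inequality (Theorem~\ref{thm:karamata}) to the non-decreasing convex function $f(x)=x^p$ on $[0,\infty)$ with $k=2$. First I would substitute
\begin{equation*}
s:=\tfrac{1}{2}(z+\bar z),\qquad t:=\tfrac{1}{2}(z-\bar z),\qquad \mu:=|2\lambda-1|\in[0,1],
\end{equation*}
so that $\lambda z+(1-\lambda)\bar z = s+(2\lambda-1)t$ and $\lambda\bar z+(1-\lambda)z = s-(2\lambda-1)t$. Since flipping the sign of $t$ does not change absolute values, the inequality to be proven becomes
\begin{equation*}
|s+\mu t|^{p}+|s-\mu t|^{p}\ \geq\ \mu^{p}\bigl(|s+t|^{p}+|s-t|^{p}\bigr).
\end{equation*}

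Next I would use the two elementary identities $|a+b|+|a-b|=2\max\{|a|,|b|\}$ and $\max\{|a+b|,|a-b|\}=|a|+|b|$ to compute
\begin{equation*}
\max\{|s+\mu t|,|s-\mu t|\}=|s|+\mu|t|,\qquad |s+\mu t|+|s-\mu t|=2\max\{|s|,\mu|t|\},
\end{equation*}
together with the analogous formulas for $(|z|,|\bar z|)$ obtained by setting $\mu=1$. The Karamata majorization hypotheses with $k=2$ then reduce to
\begin{equation*}
|s|+\mu|t|\ \geq\ \mu(|s|+|t|)\quad\text{and}\quad \max\{|s|,\mu|t|\}\ \geq\ \mu\max\{|s|,|t|\},
\end{equation*}
both of which follow at once from $\mu\leq 1$. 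Theorem~\ref{thm:karamata} then delivers the asserted inequality.

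For the equality analysis, when $p>1$ strict convexity of $x\mapsto x^{p}$ forces both majorization inequalities above to be equalities; a direct inspection shows this requires $\mu=1$ (case (i)) or $s=0$, i.e.\ $\bar z=-z$ (case (ii)). When $p=1$ the sum-majorization alone suffices for Karamata, and its equality condition becomes $\max\{|s|,\mu|t|\}=\mu\max\{|s|,|t|\}$. A case split on the sign of $z\bar z$ completes the analysis: if $z\bar z\ge 0$ then $|s|\ge |t|$ and equality again forces $s=0$ or $\mu=1$; if $z\bar z<0$ then $|s|<|t|$ and equality is equivalent to $|s|\le\mu|t|$. Setting $M=\max\{|z|,|\bar z|\}$ and $m=\min\{|z|,|\bar z|\}$, the identities $|s|=(M-m)/2$ and $|t|=(M+m)/2$ (valid when $z\bar z<0$) translate this into $\mu\ge (M-m)/(M+m)$, which via $\max\{\lambda,1-\lambda\}=(1+\mu)/2$ is exactly condition~(iii).

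The main obstacle will not be the inequality itself---after the $(s,t,\mu)$ substitution it is essentially a one-liner---but the bookkeeping for the $p=1$ equality case, where strict convexity is lost and one must verify that conditions (i)--(iii) exactly exhaust the tight configurations.
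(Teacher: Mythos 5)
Your proposal is correct and takes essentially the same route as the paper: both proofs apply Karamata's inequality (Theorem~\ref{thm:karamata}) with $k=2$ to $f(t)=t^p$ on $\R_{\geq 0}$, to exactly the same pair of majorized sequences. Your substitution $s=\tfrac12(z+\bar z)$, $t=\tfrac12(z-\bar z)$ together with the identities for $|a+b|+|a-b|$ and $\max\{|a+b|,|a-b|\}$ merely streamlines the verification of the majorization hypotheses (replacing the paper's WLOG normalizations and triangle-inequality step) and yields the same equality cases, including the correct translation of $|s|\leq\mu|t|$ into condition (iii).
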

\begin{proof}
	By symmetry we may assume $\lambda\geq\frac12$, $|z|\geq |\bar{z}|$ and $z\geq 0$. Write
	\begin{align*}
	x_1 &= |\lambda z+(1-\lambda)\bar{z}|,\\
	x_2 &= |\lambda \bar{z}+(1-\lambda)z|,\\
	y_1 &= (2\lambda-1)z,\\
	y_2 &= (2\lambda-1)|\bar{z}|.
	\end{align*}
	We want to apply Karamata's inequality with $D=\R_{\geq 0}$
        and $f(t)=t^p$. We readily have
	\begin{gather*}
	y_1 \geq y_2,\\
	x_1^2-x_2^2 = (2\lambda-1)(z^2-\bar{z}^2)\geq 0,
	\end{gather*}
	and since  $\bar{z}\geq -z$ we also have 
\begin{equation*}
                x_1\geq y_1.
\end{equation*} 
	It remains to show that $x_1+x_2\geq y_1+y_2$. The triangle inequality gives
	\begin{equation}
	|(\lambda z+(1-\lambda)\bar{z})\pm (\lambda \bar{z}+(1-\lambda)z)| \leq x_1+x_2.
	\end{equation}
	Hence
	\begin{align*}
	x_1+x_2 &\geq \max\{ (2\lambda-1) |z-\bar{z}|,|z+\bar{z}| \}\\
	&\geq (2\lambda-1) \max\{ |z-\bar{z}|,|z+\bar{z}| \}\\
	\notag
	&= (2\lambda-1) (z+|\bar{z}|)\\
	\notag
	&= y_1+y_2
	\end{align*}
	and Karamata's inequality \eqref{eq:karamata} 
        yields $x_1^p+x_2^p\geq y_1^p+y_2^p$, i.e., the inequality of
        the lemma. 
	
	Suppose now we have equality and as before we assume
        $\lambda\geq\frac12$, $|z|\geq |\bar{z}|$ and $z\geq 0$.  
 Let  $\lambda <1$ and first let  $p>1$. In this case the equality
        condition of Karamata's inequality asserts $x_1=y_1$ and since
        $\bar{z}\geq -z$, $\lambda\geq \frac12$ we conclude $\bar{z}=-z$.


	
	Now suppose $p=1$, $\lambda<1$  and $z\neq -\bar{z}$. In view
        of our assumptions we have $\lambda z+(1-\lambda)\bar{z}\geq 0$ and so if 
    \begin{equation*}
     (2\lambda-1)(z+|\bar z|)=y_1+y_2=x_1+x_2=  \lambda
     z+(1-\lambda)\bar{z} + |\lambda \bar{z}+(1-\lambda)z| 
\label{eq:last_equality} 
    \end{equation*}
then $\lambda \bar{z}+(1-\lambda)z \leq 0$ and $\bar z< 0$. Thus   
	\begin{equation*}
		\lambda \geq \frac{z}{z-\bar{z}}.
	\end{equation*}
        On the other hand,  condition
        \eqref{itm:karamata_equality_p_1} implies $x_1+x_2=y_1+y_2$.

\end{proof}

The next  lemma allows us to replace spheres 
appearing as level sets of the norm function by hyperplanes. It
appeared first in Alesker \cite{Alesker:1995} and for more explicit versions of it we refer to 
\cite[Lemma 2.1]{Paouris:2003} and
\cite[(10.4.2)]{ArtsteinGiannopoulosMilman:2015}.

\begin{lemma}
	\label{lem:interchange_norm_and_scalar_product}
	Let $p\geq 1$. There is a constant $c=c(n,p)$ such that for every $\vx\in\R^n$
	\begin{equation}
		|\vx|^p = c\cdot \int_{\Sph^{n-1}} |\langle \vx,\vtheta\rangle |^p \diff\mathcal{H}^{n-1}(\vtheta).
	\end{equation}
\end{lemma}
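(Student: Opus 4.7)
The plan is to exploit the two symmetries of the integral
\[
F(\vx) := \int_{\Sph^{n-1}} |\langle \vx,\vtheta\rangle|^p \diff\mathcal{H}^{n-1}(\vtheta).
\]
First I would note that the integrand is positively homogeneous of degree $p$ in $\vx$, so $F(\lambda \vx) = \lambda^p F(\vx)$ for every $\lambda \geq 0$. Second, since $\mathcal{H}^{n-1}$ on $\Sph^{n-1}$ is rotation-invariant, the substitution $\vtheta \mapsto T^{-1}\vtheta$ for any $T\in O(n)$ yields $F(T\vx) = F(\vx)$.

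Combining these, $F$ is a radial function: for any $\vx\neq\vnull$, writing $\vx = |\vx|\vu$ with $\vu\in\Sph^{n-1}$ and choosing an orthogonal transformation $T$ with $T\vu = \ve_1$, one obtains
\[
F(\vx) = |\vx|^p F(\vu) = |\vx|^p F(T\vu) = |\vx|^p F(\ve_1).
\]
Setting $c = 1/F(\ve_1)$ then proves the claim for $\vx \neq \vnull$, while the case $\vx=\vnull$ is immediate.

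The only point to verify is that $c$ is a finite positive constant, i.e., that $0 < F(\ve_1) < \infty$. Finiteness is obvious since $|\langle \ve_1,\vtheta\rangle|^p \leq 1$ on $\Sph^{n-1}$, and positivity follows because the integrand vanishes only on the great subsphere $\ve_1^\perp \cap \Sph^{n-1}$, which has $\mathcal{H}^{n-1}$-measure zero. There is no real obstacle; the only mild subtlety is that the hypothesis $p\geq 1$ is not actually needed here (any $p > -(n-1)$ would suffice for integrability), but for the application in the paper we only require $p\geq 1$.
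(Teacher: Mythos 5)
Your argument is correct and complete: homogeneity of degree $p$ plus the rotation invariance of $\mathcal{H}^{n-1}$ on $\Sph^{n-1}$ forces $F(\vx)=|\vx|^p F(\ve_1)$, and your check that $0<F(\ve_1)<\infty$ justifies setting $c=1/F(\ve_1)$. The paper itself does not prove this lemma but only cites the literature (Alesker, Paouris, Artstein-Avidan--Giannopoulos--Milman); the symmetry argument you give is exactly the standard proof found there, and your remark that $p\geq 1$ is not needed (only integrability of $|\langle\ve_1,\vtheta\rangle|^p$ on the sphere) is also accurate.
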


Now we can give the proof of Theorem
\ref{thm:dual_curvature_measure_combination}. After applying Lemma
\ref{lem:interchange_norm_and_scalar_product} we basically follow the
Kneser-S\"uss proof of the Brunn-Minkowski inequality as given in 
\cite[Proof of Theorem 7.1.1]{Schneider:1993}, and
then at the end we use Lemma \ref{lem:inequality_karamata}.

\begin{proof}[Proof of Theorem \ref{thm:dual_curvature_measure_combination}]
	Without loss of generality  we assume that
        $\vol_k(K_0)=\vol(K_1)=1$. 
        The Brunn-Minkowski-inequality
        \eqref{eq:brunn_minkowski} gives in this setting for any
        $\lambda\in [0,1]$ 
        \begin{equation}
          \vol_k(K_\lambda) \geq 1.   
\label{eq:sp_bm} 
        \end{equation}

In order to prove the desired inequality 
        \eqref{eq:dual_curvature_measure_combination} we first
        substitute there the integrand  $|\vx|^p$  via
        Lemma~\ref{lem:interchange_norm_and_scalar_product} which
        leads after an application of Fubini to the equivalent
        inequality 
	\begin{equation}
	\begin{split}
		\int_{\Sph^{n-1}} &\left( \int_{K_\lambda} |\langle \vx,\vtheta\rangle|^p \diff\mathcal{H}^k(\vx) + \int_{K_{1-\lambda}} |\langle \vx,\vtheta\rangle|^p \diff\mathcal{H}^k(\vx) \right) \diff\mathcal{H}^{n-1}(\vtheta)\\
		&\geq |2\lambda-1|^p \times \\ &  \int_{\Sph^{n-1}}
                \left( \int_{K_0} |\langle \vx,\vtheta\rangle|^p
                  \diff\mathcal{H}^k(\vx) + \int_{K_1} |\langle
                  \vx,\vtheta\rangle|^p \diff\mathcal{H}^k(\vx)
                \right) \diff\mathcal{H}^{n-1}(\vtheta).
\label{eq:to-show1}
	\end{split}
	\end{equation}
Obviously in order to prove \eqref{eq:to-show1} it suffices to verify
for every $\vtheta\in \Sph^{n-1}$ 
	\begin{equation}
	\begin{split}
		\int_{K_\lambda} &|\langle \vx,\vtheta\rangle|^p \diff\mathcal{H}^k(\vx) + \int_{K_{1-\lambda}} |\langle \vx,\vtheta\rangle|^p \diff\mathcal{H}^k(\vx)\\
		&\geq |2\lambda-1|^p \left( \int_{K_0} |\langle \vx,\vtheta\rangle|^p \diff\mathcal{H}^k(\vx) + \int_{K_1} |\langle \vx,\vtheta\rangle|^p \diff\mathcal{H}^k(\vx) \right).
	\end{split}
\label{eq:to-show2}
	\end{equation}

Let $\vtheta\in \Sph^{n-1}$ and for $\alpha\in\R$ denote
\begin{gather*}
	H(\vtheta,\alpha) = \{ \vx\in\R^n : \langle\vx,\vtheta\rangle = \alpha \},\\
	H^-(\vtheta,\alpha) = \{ \vx\in\R^n : \langle\vx,\vtheta\rangle \leq \alpha \}.
\end{gather*}
First suppose that $K_0$ lies in a hyperplane parallel to $\vtheta^\perp$
(and therefore $K_\lambda$ for $\lambda\in [0,1]$), i.e., $K_i\subset
H(\vtheta,\alpha_i)$, $\alpha_i\in\R$,
$i\in\{0,1\}$. By \eqref{eq:sp_bm} and
Lemma~\ref{lem:inequality_karamata} we find 
	\begin{align*}
		\int_{K_\lambda} &|\langle \vx,\vtheta\rangle|^p \diff\mathcal{H}^k(\vx) + \int_{K_{1-\lambda}} |\langle \vx,\vtheta\rangle|^p \diff\mathcal{H}^k(\vx)\\
		&=\vol_k(K_\lambda)|(1-\lambda)\alpha_0+\lambda\alpha_1|^p + \vol_k(K_{1-\lambda})|\lambda\alpha_0+(1-\lambda)\alpha_1|^p\\
		&\geq 
                  |(1-\lambda)\alpha_0+\lambda\alpha_1|^p + |\lambda\alpha_0+(1-\lambda)\alpha_1|^p
          \\
		&\geq |2\lambda-1|^p 
                  (|\alpha_0|^p+|\alpha_1|^p)\\
&= |2\lambda-1|^p 
                  (\vol_k(K_0)|\alpha_0|^p+\vol_k(K_1)|\alpha_1|^p)\\
		&= |2\lambda-1|^p \left( \int_{K_0} |\langle \vx,\vtheta\rangle|^p \diff\mathcal{H}^k(\vx) + \int_{K_1} |\langle \vx,\vtheta\rangle|^p \diff\mathcal{H}^k(\vx) \right).
	\end{align*}

Thus, in the following  we may assume that $K_0\not\subseteq \vv+\vtheta^\perp$ for all
$\vv\in\R^n$ and hence,  $-h_{K_i}(-\vtheta)< h_{K_i}(\vtheta)$ for
$i=0,1$. For $t\in\R$ and for $i=0,1$ we set 
	\begin{align*}
		v_i(t) &= \vol_{k-1}(K_i\cap H(\vtheta,t)),\\
		w_i(t) &= \vol_k(K_i\cap H^-(\vtheta,t)),
	\end{align*}
	so that $w_i(t)=\int_{-\infty}^t v_i(\zeta)\diff\zeta$,
        $i\in\{0,1\}$. On $(-h_{K_i}(-\vtheta),h_{K_i}(\vtheta))$ the
        function $v_i$ is continuous and hence $w_i$ is
        differentiable.  For $i\in\{0,1\}$ let $z_i$ be the inverse
        function of $w_i$. Then $z_i$ is differentiable with 
\begin{equation}
\label{eq:derivative}  
z_i'(\tau)=\frac{1}{w_i'(z_i(\tau))}=\frac{1}{v_i(z_i(\tau))}
\end{equation} 
 for $\tau\in (0,1)$. Writing $z_\mu(\tau)=(1-\mu)z_0(\tau)+\mu z_1(\tau)$ for $\mu\in [0,1]$ we have
	\begin{align}
		\int_{K_\mu} |\langle \vx,\vtheta\rangle|^p \diff\mathcal{H}^k(\vx) &= \int_{\R} |t|^p \vol_{k-1}(K_\mu \cap H(\vtheta,t)) \diff t\\
		&= \int_0^1 |z_\mu(\tau)|^p \vol_{k-1}(K_\mu \cap
                  H(\vtheta,z_\mu(\tau))) z_\mu'(\tau) \diff \tau.
\label{eq:step_bm} 
	\end{align}
	Since $K_\mu \cap H(\vtheta,z_\mu(\tau)) \supseteq (1-\mu)
        (K_0\cap H(\vtheta,z_0(\tau))) + \mu(K_1\cap
        H(\vtheta,z_1(\tau)))$ we may apply the Brunn-Minkowski
        inequality to the latter set  and together with \eqref{eq:derivative} we get 
	\begin{equation}
	\label{eq:BM_sections}
	\begin{split}
		\vol_{k-1}(&K_\mu \cap H(\vtheta,z_\mu(\tau)))
                z_\mu'(\tau)\\[1ex]
&\geq \vol_{k-1}\left( (1-\mu)\,
        (K_0\cap H(\vtheta,z_0(\tau))) + \mu\,(K_1\cap
        H(\vtheta,z_1(\tau)))\right)\,  z_\mu'(\tau)\\ 
		&\geq \left[ (1-\mu) v_0(z_0(\tau))^{\frac{1}{k-1}}+\mu v_1(z_1(\tau))^{\frac{1}{k-1}} \right]^{k-1} \left[ \frac{1-\mu}{v_0(z_0(\tau))}+\frac{\mu}{v_1(z_1(\tau))} \right]\\
		&\geq \left[ v_0(z_0(\tau))^{\frac{1-\mu}{k-1}} v_1(z_1(\tau))^{\frac{\mu}{k-1}} \right]^{k-1} \left[ v_0(z_0(\tau))^{-(1-\mu)} v_1(z_1(\tau))^{-\mu} \right]\\
		&=1,
	\end{split}
	\end{equation}
	where for the last inequality we used  the weighted
        arithmetic/geometric-mean inequality.	Therefore,
        \eqref{eq:step_bm} and \eqref{eq:BM_sections} yield to 
	\begin{equation}
	\begin{split}
		\int_{K_\lambda} &|\langle \vx,\vtheta\rangle|^p \diff\mathcal{H}^k(\vx) + \int_{K_{1-\lambda}} |\langle \vx,\vtheta\rangle|^p \diff\mathcal{H}^k(\vx)\\
		&\geq \int_0^1 |z_\lambda(\tau)|^p + |z_{1-\lambda}(\tau)|^p \diff\tau\\
		&= \int_0^1 |\lambda z_1(\tau)+(1-\lambda) z_0(\tau)|^p + |\lambda z_0(\tau)+(1-\lambda) z_1(\tau)|^p \diff\tau.
	\end{split}
	\end{equation}
	Next in order to estimate the integrand  we use
        Lemma~\ref{lem:inequality_karamata} and then we substitute back
        via \eqref{eq:derivative} 
	\begin{align*}
		\int_{K_\lambda} &|\langle \vx,\vtheta\rangle|^p
                                   \diff\mathcal{H}^k(\vx) +
                                   \int_{K_{1-\lambda}} |\langle
                                   \vx,\vtheta\rangle|^p
                                   \diff\mathcal{H}^k(\vx)\\
&\geq \int_0^1 |\lambda z_1(\tau)+(1-\lambda) z_0(\tau)|^p + |\lambda z_0(\tau)+(1-\lambda) z_1(\tau)|^p \diff\tau\\
		\stepcounter{equation}\tag{\theequation}\label{eq:karamata_applied}
		&\geq \int_0^1 |2\lambda-1|^p \Big(|z_0(\tau)|^p+|z_1(\tau)|^p\Big) \diff\tau\\
		&= |2\lambda-1|^p \left( \int_0^1 |z_0(\tau)|^p \diff\tau + \int_0^1 |z_1(\tau)|^p \diff\tau \right)\\
		&= |2\lambda-1|^p \left( \int_0^1 |z_0(\tau)|^p v_0(z_0(\tau))\cdot z_0'(\tau) \diff\tau \right.\\
		&\qquad\qquad\qquad \left. + \int_0^1 |z_1(\tau)|^p v_1(z_1(\tau))\cdot z_1'(\tau) \diff\tau \right)\\
		&= |2\lambda-1|^p \left( \int_{\R} |t|^p \vol_{k-1}(K_0\cap H(\vtheta,t)) \diff t \right.\\
		&\qquad\qquad\qquad \left. + \int_{\R} |t|^p \vol_{k-1}(K_1\cap H(\vtheta,t)) \diff t \right)\\
		&= |2\lambda-1|^p \left( \int_{K_0} |\langle \vx,\vtheta\rangle|^p \diff\mathcal{H}^k(\vx) + \int_{K_1} |\langle \vx,\vtheta\rangle|^p \diff\mathcal{H}^k(\vx) \right).
	\end{align*}
Hence we have shown \eqref{eq:to-show2} and thus \eqref{eq:dual_curvature_measure_combination}.

Now suppose that equality holds in
\eqref{eq:dual_curvature_measure_combination} for two $k$-dimensional
convex bodies $K_0,K_1$ of $k$-dimensional volume 1. We may assume  $\lambda\in
(0,1)$. Then we also have equality in \eqref{eq:to-show2} for any
$\vtheta\in \Sph^{n-1}$ and  
we choose a $\vtheta\in \Sph^{n-1}$ such that $K_0\not\subseteq
\vv+\vtheta^\perp$ for any $\vv\in\R^n$. 

Then the equality in
\eqref{eq:to-show2} implies equality in \eqref{eq:BM_sections}
and thus 
	\begin{equation}
\begin{split} 
		\vol_k(K_\lambda) & = \int_0^1 \vol_{k-1}(K_\lambda \cap
                H(\vtheta,z_\lambda(\tau))) z_\lambda'(\tau)
                \diff\tau\\ 
                &=1 =\lambda\vol_k(K_0)+(1-\lambda)\vol_k(K_1).
\end{split} 
	\end{equation}
Hence, by the equality conditions of the Brunn-Minkowski inequality,
$K_0$ and $K_1$ are homothets and we conclude $K_0=\vv+K_1$ for some
$\vv\in\R^n$. Thus  for $\tau\in [0,1]$ 
	\begin{equation}
\label{eq:z_compare} 
		z_0(\tau) = z_1(\tau)+\langle \vtheta,\vv\rangle.
	\end{equation}
	
        Since we must also  have equality in \eqref{eq:karamata_applied} 
         the equality conditions (ii), (iii) of
        Lemma~\ref{lem:inequality_karamata} can be applied to
        $z_i(\tau)$, $i=0,1$.  
        If $p>1$ then  Lemma~\ref{lem:inequality_karamata} (ii)
        implies $z_0(\tau)=-z_1(\tau)$ for $\tau\in[0,1]$.  
        Together with \eqref{eq:z_compare}, however, 
        we get the contradiction that $z_0(\tau)=\frac12 \langle\vtheta,\vv\rangle$ is constant. 
        
        Thus we must have $p=1$ and in this case we get from
        Lemma~\ref{lem:inequality_karamata} (iii) and \eqref{eq:z_compare}   
\begin{align} 
\label{eq:equality}
      0&\geq z_0(\tau)z_1(\tau)=z_0(\tau)(z_0(\tau)-\langle
                \vtheta,\vv\rangle), \\ 
        \max\{\lambda,1-\lambda\}&\geq
  \frac{\max\{|z_0(\tau)|,|z_1(\tau)|\}}{|z_0(\tau)|+|z_1(\tau)|}. 
\label{eq:equality_two}
\end{align} 
        For a sufficiently small positive $\varepsilon$ the right   
	hand side of \eqref{eq:equality}, as a function in $z_0$, is positive on
        some interval $(-\varepsilon,0)$ or $(0,\varepsilon)$.  If 
\begin{equation}
        h_{K_0}(-\vtheta),\,h_{K_0}(\vtheta) > 0 
\label{eq:choosetheta} 
\end{equation} 
then the domain $[-h_{K_0}(-\vtheta),h_{K_0}(\vtheta)]$  of
$z_0(\tau)$ contains a sufficiently small neighborhood of $0$ which
then would  contradict \eqref{eq:equality}.

Hence we can assume that there exists no $\vtheta\in \Sph^{n-1}$
satisfying \eqref{eq:choosetheta} which means that there exists no two
points in $K_0$ which can be strictly separated by a hyperplane containing
$\vnull$. 
Thus we can assume that $K_0$ is a segment contained in a line $\lin \vu$
for some $\vu\in \Sph^{n-1}$ and the origin is not a relative interior
point of $K_0$. Interchanging the roles of $K_0$ and
$K_1$ in the argumentation above leads to the same conclusion for $K_1$ and
since the affine hulls of $K_0$ and $K_1$ are parallel we also have $K_1\subset\lin\vu$.

So without loss of generality let  $K_0=[\alpha_0,\alpha_0+1]\cdot\vu$, $\alpha_0\geq 0$, 
and $K_1=[\alpha_1,\alpha_1+1]\cdot \xi\vu$ with $\alpha_1\geq 0$ and
$\xi\in\{\pm 1\}$.  We may choose $\vtheta=\vu$ and 
the inequality $0\geq z_0(\tau)z_1(\tau)$ (cf.~\eqref{eq:equality})
gives $\xi=-1$,  i.e., $K_1=[-\alpha_1-1,-\alpha_1]\cdot\vu$. It remains to show that equality in \eqref{eq:dual_curvature_measure_combination} in this setting is equivalent to $\vu^\perp$ separating $K_\lambda$ and $K_{1-\lambda}$.
        
 By symmetry we may assume $\lambda\geq\frac12$ and by the choice of
 $\vtheta$ we get  
        \begin{align*}
        	z_0(\tau)&= \alpha_0+\tau,\\
        	z_1(\tau)&= -(\alpha_1+1-\tau).
        \end{align*}
       The equality condition \eqref{eq:equality_two} yields 
        \begin{equation}
        	\lambda \geq \frac{\max\{\alpha_0+\tau,\alpha_1+1-\tau\}}{\alpha_0+\alpha_1+1}
        \end{equation}
        for all $\tau\in [0,1]$ and  so
        \begin{equation}
      	  \lambda \geq \frac{\max\{\alpha_0,\alpha_1\}+1}{\alpha_0+\alpha_1+1}.
        \end{equation}
        Therefore $(1-\lambda)(\alpha_0+1)-\lambda\alpha_1\leq 0$ and
        $\lambda\alpha_0-(1-\lambda)(\alpha_1+1)\geq 0$, which in turn
        gives 
\begin{equation} 
        \begin{split}
        	K_\lambda &= [(1-\lambda)\alpha_0-\lambda(\alpha_1+1),\,(1-\lambda)(\alpha_0+1)-\lambda\alpha_1] \cdot\vu \subset H^-(\vu,0),\\[1ex]
        	K_{1-\lambda} &=
                                [\lambda\alpha_0-(1-\lambda)(\alpha_1+1),\,
                                \lambda(\alpha_0+1)-(1-\lambda)\alpha_1]\cdot\vu
                                \subset H^-(-\vu,0).
        \end{split}
\label{eq:repre}
\end{equation} 
Thus $\vu^\perp$ separates $K_\lambda$ and $K_{1-\lambda}$.  It remains
to show that  this
separating property also implies equality in
\eqref{eq:dual_curvature_measure_combination}.  If there exits such an
$\vu$ then we may assume that $K_\lambda$ and $K_{1-\lambda}$ are given as in \eqref{eq:repre}
and so 
        \begin{align*}
        	\int_{K_\lambda} |\vx| &\diff\mathcal{H}(\vx)+\int_{K_{1-\lambda}} |\vx| \diff\mathcal{H}(\vx)\\
        	&=\int\limits_{(1-\lambda)\alpha_0-\lambda(\alpha_1+1)}^{(1-\lambda)(\alpha_0+1)-\lambda\alpha_1} -t \diff t + \int\limits_{\lambda\alpha_0-(1-\lambda)(\alpha_1+1)}^{\lambda(\alpha_0+1)-(1-\lambda)\alpha_1} t\diff t\\
        	&= \frac12 \Big[ (\lambda(\alpha_0+1)-(1-\lambda)\alpha_1)^2 - (\lambda\alpha_0-(1-\lambda)(\alpha_1+1))^2\\
        	&\qquad - ((1-\lambda)(\alpha_0+1)-\lambda\alpha_1)^2 + ((1-\lambda)\alpha_0-\lambda(\alpha_1+1))^2 \Big]\\
        	&= (2\lambda-1)(\alpha_0+\alpha_1+1)\\
        	&= (2\lambda-1) \cdot \frac12 \Big[ (\alpha_0+1)^2-\alpha_0^2+(\alpha_1+1)^2-\alpha_1^2 \Big]\\
        	&= (2\lambda-1) \left( \int_{K_0} |\vx| \diff\mathcal{H}(\vx)+\int_{K_1} |\vx| \diff\mathcal{H}(\vx) \right).
        \end{align*}
\end{proof}

We remark that the factor $|2\lambda-1|^p$ in Theorem
\ref{thm:dual_curvature_measure_combination} (as well as in Corollary \ref{cor: dual_curvature_measure_combination})
cannot be replaced by a smaller one. For if, let $C\in\Ken$, $\vu\in
\Sph^{n-1}$, $\rho\in\R_{>0}$, $K_0=C+\rho\cdot \vu$ and $K_1=-K_0$.
Then, for  every $\mu\in[0,1]$ we have 
\begin{gather*}
\int_{K_\mu} |\vx|^p \diff\mathcal{H}^n(\vx) = \rho^p\int_{C} |\rho^{-1}\vx+(2\mu-1)\vu|^p \diff\mathcal{H}^n(\vx).
\end{gather*}
Moreover, the symmetry of $C$ gives
$K_{1-\mu}=-K_{\mu}$ and 
therefore, for a given $\lambda\in[0,1]$ we conclude 
\begin{equation}
\lim_{\rho\to\infty} \frac{\int_{K_{1-\lambda}} |\vx|^p
  \diff\mathcal{H}^n(\vx)}{\int_{K_1}
  |\vx|^p\diff\mathcal{H}^n(\vx)}=	\lim_{\rho\to\infty} \frac{\int_{K_\lambda} |\vx|^p \diff\mathcal{H}^n(\vx)}{\int_{K_0} |\vx|^p\diff\mathcal{H}^n(\vx)}=|2\lambda-1|^p.
\end{equation}

\section{Proof of Theorem~\ref{thm:subspace_bound}}
Now we are ready to prove 
Theorem~\ref{thm:subspace_bound}. 
We use Fubini's theorem to decompose the dual curvature measure of
$K\in\Ken$ into
integrals over sections with affine planes orthogonal to the given
subspace $L$. In order to compare these
integrals with the corresponding integrals over sections associated to
the dual curvature measure of the Borel set $\Sph^{n-1}\cap L$ we
apply 
Corollary~\ref{cor: dual_curvature_measure_combination}. 
\begin{proof}[Proof of Theorem~\ref{thm:subspace_bound}] Let $\dim
  L=k\in\{1,\dots,n-1\}$, and for  
	 $\vy\in K|L$ let
        $\ov{\vy}=\rho_{K|L}(\vy)\vy\in\partial(K|L)$, 
	\begin{align*}
		K_\vy &= K\cap (\vy+L^\perp),\\
		M_{\ov{\vy}} &= \conv\{K_\vnull, K_{\ov{\vy}}\}.
	\end{align*}
	By Lemma~\ref{lem:curvature_measure_euclidean_coordinates}, 
	Fubini's theorem and the fact that $M_{\ov{\vy}}\cap(\vy+L^\perp)\subseteq K_\vy$ we may write 
	\begin{equation}
	\begin{split} 
		\tilde{C}_q(K,\Sph^{n-1}) &= \frac{q}{n} \int\limits_{K|L} \left(\,\int\limits_{K_\vy} |\vz|^{q-n} \diff\mathcal{H}^{n-k}(\vz) \right)\diff\mathcal{H}^k(\vy)\\
		&\geq \frac{q}{n} \int\limits_{K|L} \left(\,
		\int\limits_{M_{\ov{\vy}}\cap (\vy+L^\perp)} |\vz|^{q-n}
		\diff\mathcal{H}^{n-k}(\vz)
		\right)\diff\mathcal{H}^k(\vy).
	\end{split}
	\label{eq:first_step} 
	\end{equation}
	In order to estimate the inner integral we fix  a $\vy\in K|L$, $\vy\neq\vnull$, and for abbreviation we set
	$\tau=\rho_{K|L}(\vy)^{-1}\leq 1$. Then, by the symmetry of $K$ we find 
	\begin{align*}
		M_{\ov{\vy}}\cap(\vy+L^\perp) \supseteq& \tau (K_{\ov{\vy}})+(1-\tau)(K_\vnull)\\
		\supseteq& \tau (K_{\ov{\vy}})+(1-\tau)\left( \frac12  K_{\ov{\vy}}  + \frac12 ( - K_{\ov{\vy}} ) \right)\\
		=& \frac{1+\tau}{2}K_{\ov{\vy}}+\frac{1-\tau}{2}(-K_{\ov{\vy}}). 
	\end{align*}
	Hence, $M_{\ov{\vy}}\cap(\vy+L^\perp)$ contains a convex combination
	of a set and its reflection at the origin. This allows us to
        apply Corollary ~\ref{cor: dual_curvature_measure_combination}
        from which we  obtain 
	\begin{align*}
		\int\limits_{M_{{\ov\vy}}\cap(\vy+L^\perp)} |\vz|^{q-n} \diff\mathcal{H}^{n-k}(\vz) &\geq \int\limits_{\frac{1+\tau}{2}K_{\ov{\vy}}+\frac{1-\tau}{2}(-K_{\ov{\vy}})} |\vz|^{q-n} \diff\mathcal{H}^{n-k}(\vz)\\
		&\geq \tau^{q-n}\int\limits_{K_{\ov{\vy}}} |\vz|^{q-n}\diff\mathcal{H}^{n-k}(\vz)
	\end{align*}
	for every $\vy\in K|L$, $\vy\neq\vnull$. By the equality
        characterization in Corollary \ref{cor:
          dual_curvature_measure_combination}  the  last inequality is
        strict  whenever $\tau<1$, i.e., $\vy$ belongs to the relative
        interior of $K|L$. 
	Together with \eqref{eq:first_step} we obtain the lower bound  
	\begin{equation}
		\dcura_q(K,\Sph^{n-1}) > \frac{q}{n} \int\limits_{K|L}\left(\, \rho_{K|L}(\vy)^{n-q}\int\limits_{K_{\ov{\vy}}} |\vz|^{q-n}\diff\mathcal{H}^{n-k}(\vz)\right) \diff\mathcal{H}^k(\vy).
		\label{eq:lower_bound} 
	\end{equation}	
	In order to evaluate $\dcura_q(K,\Sph^{n-1}\cap L)$ we note that for  $\vx\in
	K$  we have  $\vx/|\vx|\in\alpha_K^\ast(\Sph^{n-1}\cap L)$ if and only if
	the boundary point  $\rho_K(\vx)\vx$ has an outer unit normal in
	$L$. Hence,   
	\begin{align}
	\begin{split}
		\{\vnull\} &\cup \left\{\vx\in K  :
		\vx/|\vx|\in\alpha_K^\ast(\Sph^{n-1}\cap L) \right\}
              \\ &= 
		\bigcup_{\vv\in \partial(K|L)}  \conv\{\vnull,K_\vv
                \},  
	\end{split}
	\end{align} 
	and in view of Lemma
	\ref{lem:curvature_measure_euclidean_coordinates} and Fubini's theorem
	we may write 
	\begin{align*}
		\dcura_q(K,\,&\Sph^{n-1}\cap L)\\ 
		&=\frac{q}{n} \int\limits_{K|L} \left(\,\int\limits_{\conv\{ \vnull,K_{\ov{\vy}} \}\cap(\vy+L^\perp) } |\vz|^{q-n} \diff\mathcal{H}^{n-k}(\vz)\right)\diff\mathcal{H}^k(\vy)\\
		&=\frac{q}{n} \int\limits_{K|L} \left(\,\int\limits_{\rho_{K|L}(\vy)^{-1}K_{\ov{\vy}}} |\vz|^{q-n} \diff\mathcal{H}^{n-k}(\vz)\right)\diff\mathcal{H}^k(\vy)\\
		&=\frac{q}{n} \int\limits_{K|L} \rho_{K|L}(\vy)^{k-q}\left( \int\limits_{K_{\ov{\vy}}} |\vz|^{q-n} \diff\mathcal{H}^{n-k}(\vz)\right)\diff\mathcal{H}^k(\vy).
	\end{align*}
	
	The inner integral is independent of the length of $\vy\in
        K|L$ and might be as well considered as the value $g(\vu)$ of
        a (measurable) function $g\colon \Sph^{n-1}\cap L\to\R_{\geq
          0}$, i.e., 
        \begin{equation*}
           g(\vu)=\int\limits_{K_{\ov{\vu}}} |\vz|^{q-n} \diff\mathcal{H}^{n-k}(\vz).
        \end{equation*}
 With this notation and using spherical coordinates we obtain
	\begin{align}
	\begin{split}
		\dcura_q(K,\,&\Sph^{n-1}\cap L) \\ =& \frac{q}{n}\int\limits_{K|L} \rho_{K|L}(\vy)^{k-q} g(\vy/|\vy|) \diff\mathcal{H}^k(\vy)\\
		=&\frac{q}{n}\int\limits_{\Sph^{n-1}\cap L} g(\vu)
		\left(\int\limits_0^{\rho_{K|L}(\vu)}\rho_{K|L}(r\vu)^{k-q}
		r^{k-1} \mathrm{d}r\right)
		\diff\mathcal{H}^{k-1}(\vu)\\
		=&\frac{q}{n}\int\limits_{\Sph^{n-1}\cap L} g(\vu) \rho_{K|L}(\vu)^{k-q}
		\left(\int\limits_0^{\rho_{K|L}(\vu)}r^{q-1} \mathrm{d}r\right)
		\diff\mathcal{H}^{k-1}(\vu)\\
		=&\frac{1}{n}\int\limits_{\Sph^{n-1}\cap L} g(\vu)
		\,\rho_{K|L}(\vu)^k \diff\mathcal{H}^{k-1}(\vu).
	\end{split}
	\label{eq:numerator}
	\end{align}
	Applying the same transformation to the right hand side of
	\eqref{eq:lower_bound} leads to 

	\begin{equation}
	\begin{split}
		\dcura_q(K,\Sph^{n-1}) &>\\ & \frac{q}{n}\int\limits_{K|L} \rho_{K|L}(\vy)^{n-q} g(\vy/|\vy|) \diff\mathcal{H}^k(\vy)\\
		=& \frac{q}{n}\int\limits_{\Sph^{n-1}\cap L} g(\vu)
                \left(\,\int\limits_0^{\rho_{K|L}(\vu)}
                  \rho_{K|L}(r\vu)^{n-q} r^{k-1} \mathrm{d}r\right)
                \diff\mathcal{H}^{k-1}(\vu)\\
		=& \frac{q}{n}\int\limits_{\Sph^{n-1}\cap L} g(\vu) \rho_{K|L}(\vu)^{n-q} \left(\,\int\limits_0^{\rho_{K|L}(\vu)} r^{q-n+k-1} \mathrm{d}r\right) \diff\mathcal{H}^{k-1}(\vu)\\
		=& \frac{q}{n}\frac{1}{q-n+k}\int\limits_{\Sph^{n-1}\cap L} g(\vu)
		\,\rho_{K|L}(\vu)^k \diff\mathcal{H}^{k-1}(\vu).
	\end{split}
	\label{eq:denominator}
	\end{equation}
	Combining \eqref{eq:numerator} and \eqref{eq:denominator}
        gives the desired bound 
	\begin{equation}
	\frac{	\dcura_q(K,\Sph^{n-1}\cap
		L)}{\dcura_q(K,\Sph^{n-1})} < \frac{q-n+k}{q}.
	\end{equation}
\end{proof}

Next we show that the bounds given in Theorem~\ref{thm:subspace_bound}
are tight for every choice of $q\geq n+1$.

\begin{proof}[Proof of Proposition~\ref{prop:bound_opt}.]
Let $k\in\{1,\dots,n-1\}$ and for $l\in\N$ let $K_l$ be the cylinder
given as the cartesian product of two lower-dimensional ball 
\begin{equation}
K_l=(lB_k)\times B_{n-k}.
\end{equation} 
Let  $L=\lin \{ \ve_1,\ldots,\ve_k \}$ be the $k$-dimensional subspace
generated by  the first $k$ canonical unit vectors $\ve_i$. 
For $\vx\in \R^n$ write $\vx=\vx_1+\vx_2$, where
$\vx_1=\vx|L$ and $\vx_2=\vx|L^\perp$. The supporting hyperplane of $K_l$
with respect to  a unit vector $\vv\in \Sph^{n-1}\cap L$ is given by
\begin{equation}
	H_{K_l}(\vv)=\{ \vx\in \R^n\colon \langle \vv,\vx_1\rangle=l \}.
\end{equation}
Hence the part of the boundary of $K_l$ covered by all these supporting
hyperplanes is given by $l\Sph^{k-1}\times B_{n-k}$. 
In view of Lemma~\ref{lem:curvature_measure_euclidean_coordinates} and
Fubini's theorem we conclude 
\begin{equation}
\begin{split} 
	\dcura_q(&K_l, \Sph^{n-1}\cap L) =\\  &\frac{q}{n} \int\limits_{l\,B_k} \left(\,\int\limits_{{\{\vx_2\in B_{n-k}:\,l|\vx_2|\leq |\vx_1|\}}} (|\vx_1|^2+|\vx_2|^2)^{\frac{q-n}{2}} \diff\mathcal{H}^{n-k}(\vx_2) \right)\diff\mathcal{H}^{k}(\vx_1).
\end{split}
\label{eq:curvature_measure_cylinder_fubini}
\end{equation}
Denote the volume of $B_n$ by $\omega_n$. Recall, that the surface
area of $B_n$ is given by $n\omega_n$ and for abbreviation we set  
\begin{equation} 
c=c(q,k,n)=\frac{q}{n} k\omega_k (n-k)\omega_{n-k}.
\end{equation}
Switching to the cylindrical coordinates 
\begin{equation}
\vx_1=s\vu,\quad s\geq 0, \vu\in \Sph^{k-1},\qquad 
\vx_2=t\vv,\quad t\geq 0, \vv\in \Sph^{n-k-1},
\end{equation}
transforms the right hand side of
\eqref{eq:curvature_measure_cylinder_fubini} to 
\begin{align}
	\notag
	\dcura_q(K_l,\Sph^{n-1}\cap L) &= c \int\limits_0^l \int\limits_0^{s/l} s^{k-1} t^{n-k-1} (s^2+t^2)^{\frac{q-n}{2}}\diff t\diff s\\
	\notag
	&= c \int\limits_0^l \int\limits_0^1 s^{q-1} l^{k-n} t^{n-k-1} (1+l^{-2}t^2)^{\frac{q-n}{2}} \mathrm{d}t\diff s\\
	&= c\, l^{q-n+k} \int\limits_0^1 \int\limits_0^1 s^{q-1} t^{n-k-1} (1+l^{-2}t^2)^{\frac{q-n}{2}} \mathrm{d}t\diff s.
	\label{eq:curvature_measure_cylinder_L}
\end{align}
Analogously we obtain
\begin{equation}
\begin{split} 
	\dcura_q(K_l,\Sph^{n-1}) 
	& = \frac{q}{n} \int\limits_{\vx_1\in lB_k}
	\left(\,\int\limits_{\vx_2\in B_{n-k}}
	(|\vx_1|^2+|\vx_2|^2)^{\frac{q-n}{2}}
	\diff\mathcal{H}^{n-k}(\vx_2)
	\right)\diff\mathcal{H}^{k}(\vx_1) \\
	&  = c \int\limits_0^l \int\limits_0^{1} s^{k-1} t^{n-k-1} (s^2+t^2)^{\frac{q-n}{2}}\diff t\diff s\\
	& = c\, l^k \int\limits_0^1 \int\limits_0^1 s^{k-1} t^{n-k-1} (l^2s^2+t^2)^{\frac{q-n}{2}} \mathrm{d}t\diff s\\
	& = c\, l^{q-n+k} \int\limits_0^1 \int\limits_0^1 s^{k-1} t^{n-k-1} (s^2+l^{-2}t^2)^{\frac{q-n}{2}} \mathrm{d}t\diff s.
\end{split} 
\label{eq:curvature_measure_cylinder}
\end{equation}
The monotone convergence theorem gives
\begin{equation}
\begin{split} 
\lim_{l\to \infty} \dcura_q(K_l,\Sph^{n-1}\cap L) & =
\lim_{l\to \infty} \int\limits_0^1 \int\limits_0^1 s^{q-1} t^{n-k-1}
(1+l^{-2}t^2)^{\frac{q-n}{2}} \mathrm{d}t\diff s \\ &= \int\limits_0^1
s^{q-1} \mathrm{d}s \cdot \int\limits_0^1 t^{n-k-1} \mathrm{d}t =
\frac{1}{q(n-k)}
\end{split} 
\end{equation}
and
\begin{equation}
\begin{split}
\lim_{l\to \infty}\dcura_q(K_l,\Sph^{n-1}) & = 
	\lim_{l\to \infty} \int\limits_0^1 \int\limits_0^1 s^{k-1}
        t^{n-k-1} (s^2+l^{-2}t^2)^{\frac{q-n}{2}} \mathrm{d}t\diff s
        \\ 
&= \int\limits_0^1 s^{q-n+k-1} \mathrm{d}s \cdot \int\limits_0^1 t^{n-k-1} \mathrm{d}t= \frac{1}{(q-n+k)(n-k)}.
\end{split}
\end{equation}
Hence,  
\begin{equation}
\lim_{l\to \infty}\frac{\dcura_q(K_l,\Sph^{n-1}\cap L)}{\dcura_q(K_l,\Sph^{n-1})} = \frac{q-n+k}{q}.\qedhere
\end{equation}
\end{proof}

\section{The case $n<q<n+1$}
The only remaining open range of $q$  regarding the existence of a
subspace bound on the $q$th dual curvature 
of symmetric convex bodies is when $q\in(n,n+1)$. It is apparent from
the proof of 
Theorem~\ref{thm:subspace_bound} that an extension of
Theorem~\ref{thm:dual_curvature_measure_combination} 
to $p\in(0,1)$ would suffice. However, there are examples even in the $1$-dimensional case showing that this is not possible.
\begin{proposition}
	Let $p\in(0,1)$. There is an interval $K\subset\R$ and $\lambda\in[0,1]$ such that for $K_\lambda=\lambda K+(1-\lambda)(-K)$ we have
	\begin{equation}
		\int_{K_\lambda} |x|^p \diff x < |2\lambda-1|^p \int_K |x|^p \diff x.
	\end{equation}
\end{proposition}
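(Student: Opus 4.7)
The plan is to exhibit a counterexample with $K=[0,1]$ and $\lambda$ slightly less than $1$ via a local analysis at the boundary case $\lambda=1$, where both sides of the claimed inequality collapse to the common value $\int_K|x|^p\diff x$.

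First I would fix $K=[0,1]$. Then for $\lambda\in[\tfrac12,1]$ one has $K_\lambda=\lambda K+(1-\lambda)(-K)=[-(1-\lambda),\lambda]$, and a direct computation gives
\begin{equation*}
\int_{K}|x|^p\diff x=\frac{1}{p+1},\qquad \int_{K_\lambda}|x|^p\diff x=\frac{\lambda^{p+1}+(1-\lambda)^{p+1}}{p+1}.
\end{equation*}
Hence the desired inequality is equivalent to $f(\lambda)<0$ for some $\lambda\in(\tfrac12,1)$, where
\begin{equation*}
f(\lambda):=\lambda^{p+1}+(1-\lambda)^{p+1}-(2\lambda-1)^p.
\end{equation*}

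Second, I would analyze $f$ near $\lambda=1$. Clearly $f(1)=1+0-1=0$, and since $p>0$ the function $f$ is continuously differentiable on a left neighborhood of $1$ with
\begin{equation*}
f'(\lambda)=(p+1)\lambda^{p}-(p+1)(1-\lambda)^{p}-2p(2\lambda-1)^{p-1}.
\end{equation*}
Evaluating at $\lambda=1$ gives $f'(1)=(p+1)-0-2p=1-p$, which is strictly positive since $p\in(0,1)$. Combining $f(1)=0$ with $f'(1)>0$ forces $f(\lambda)<0$ on some interval $(1-\delta,1)$, and any such $\lambda$ furnishes the required counterexample together with $K=[0,1]$.

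There is no substantive obstacle here. The conceptual content is that $p=1$ is precisely the critical exponent at which Lemma~\ref{lem:inequality_karamata} holds with equality on the degenerate one-dimensional configuration $\bar z=-z$; consequently the inequality of Theorem~\ref{thm:dual_curvature_measure_combination} is forced to reverse in a one-sided neighborhood of the equality configuration once $p$ drops below $1$. The identity $f'(1)=1-p$ merely quantifies this reversal.
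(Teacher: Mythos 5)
Your proof is correct and follows essentially the same route as the paper's: both are first-order perturbation arguments at the degenerate equality configuration $(K,\lambda)=([0,1],1)$, and in both the decisive inequality is $p+1>2p$, i.e.\ $p<1$. The only difference is the parametrization: you fix $K=[0,1]$ and let $\lambda\uparrow 1$, whereas the paper fixes $K_\lambda=[0,1]$ and perturbs $K=[\varepsilon,\varepsilon+1]$ with the coupled choice $\lambda=(\varepsilon+1)/(2\varepsilon+1)$.
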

\begin{proof}
	Let $\varepsilon>0$, $K=[\varepsilon,\varepsilon+1]$ and $\lambda=\frac{\varepsilon+1}{2\varepsilon+1}>\frac{1}{2}$ so that $K_\lambda=[0,1]$. Then
	\begin{equation}
		(p+1) \int_{K_\lambda} |x|^p \diff x = 1
	\end{equation}
	and
	\begin{align*}
		(p+1) (2\lambda-1)^p \int_K |x|^p \diff x &= (p+1)(2\varepsilon+1)^{-p} \int_{\varepsilon}^{\varepsilon+1} x^p \diff x\\
		&= \frac{(\varepsilon+1)^{p+1}-\varepsilon^{p+1}}{(2\varepsilon+1)^p}.
	\end{align*}
	Let $f(t)=(1+t)^{p+1}-t^{p+1}$ and $g(t)=(2t+1)^p$. Since
	\begin{gather*}
		f(0)=1=g(0),\\
		f'(0)=(p+1)>2p = g'(0),
	\end{gather*}
	we have
	\begin{equation}
		\frac{(\varepsilon+1)^{p+1}-\varepsilon^{p+1}}{(2\varepsilon+1)^p}>1
	\end{equation}
	for small $\varepsilon$ and hence, 
\begin{equation}
		\int_{K_\lambda} |x|^p \diff x < (2\lambda-1)^p \int_K |x|^p \diff x.
	\end{equation}
\end{proof}
Nevertheless, the examples given in the proof of
Proposition~\ref{prop:bound_opt} indicate that the subspace bound
\eqref{eq:subspace_bound} might also be correct for $q\in(n,n+1)$. Next
we will verify this in the special case of  parallelotopes and to this
end we  need the next lemma about integrals of quasiconvex
functions $f:\R^n\to\R$, i.e., functions whose sublevel sets 
\begin{equation} 
L_f^-(\alpha)=\{\vx\in\R^n : f(\vx) \leq \alpha \}, \alpha\in\R, 
\end{equation} 
 are convex. The lemma follows directly from 
 \cite[Theorem~1]{Anderson:1955} where the reverse inequality is
proved for quasiconcave functions and superlevel sets.  
\begin{lemma}
	\label{lem:quasiconvex_integral}
	Let $f\colon\R^n\to\R_{\geq 0}$ be an even, quasiconvex function, that is integrable on convex, compact sets. Let $K\in\Kn$ with $K=-K$ and $\dim K=k$. Let $\lambda\in [0,1]$ and $\vv\in\R^n$. Then 
	\begin{equation}
		\label{eq:integral_conv_comb}
		\int_{K+\lambda\vv} f(\vx)\diff\mathcal{H}^k(\vx) \leq \int_{K+\vv} f(\vx)\diff\mathcal{H}^k(\vx).
	\end{equation}
	Moreover, equality holds for $\lambda\in [0,1)$ if and only if for every $\alpha>0$
	\begin{equation}
		(K+\vv)\cap L_f^-(\alpha) = (K\cap L_f^-(\alpha))+\vv.
	\end{equation}
\end{lemma}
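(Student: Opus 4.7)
The plan is to reduce the inequality to a Brunn-Minkowski slicing argument on level sets of $f$, in the spirit of Anderson's original proof of the quasiconcave version. Setting $\Psi_\alpha(t):=\mathcal{H}^k\bigl((K+t\vv)\cap L_f^-(\alpha)\bigr)$, the layer-cake decomposition of the non-negative function $f$ gives for every $\mu\in\R$
\begin{equation*}
	\int_{K+\mu\vv} f(\vx)\diff\mathcal{H}^k(\vx) = \int_0^\infty \bigl[\mathcal{H}^k(K) - \Psi_\alpha(\mu)\bigr]\diff\alpha,
\end{equation*}
and both integrals are finite by the integrability hypothesis on $f$. Consequently, \eqref{eq:integral_conv_comb} reduces to the pointwise inequality $\Psi_\alpha(\lambda)\geq\Psi_\alpha(1)$ for every $\alpha>0$.

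To establish this, I would show that $\Psi_\alpha^{1/k}$ is concave and $\Psi_\alpha$ is even. For the concavity: given $t_1, t_2\in\R$ and $\theta\in[0,1]$, convexity of both $K$ and $L_f^-(\alpha)$ yields the inclusion
\begin{equation*}
	\theta\bigl((K+t_1\vv)\cap L_f^-(\alpha)\bigr) + (1-\theta)\bigl((K+t_2\vv)\cap L_f^-(\alpha)\bigr) \subseteq \bigl(K+(\theta t_1+(1-\theta) t_2)\vv\bigr)\cap L_f^-(\alpha),
\end{equation*}
and the summands on the left are $k$-dimensional convex sets contained in parallel affine translates of $\lin K$. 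The Brunn-Minkowski inequality \eqref{eq:brunn_minkowski} applied inside these parallel $k$-dimensional affine subspaces then gives concavity of $\Psi_\alpha^{1/k}$. The evenness $\Psi_\alpha(-t)=\Psi_\alpha(t)$ is immediate from $K=-K$ and $L_f^-(\alpha)=-L_f^-(\alpha)$, via the $\mathcal{H}^k$-preserving bijection $\vx\mapsto-\vx$ between $(K-t\vv)\cap L_f^-(\alpha)$ and $(K+t\vv)\cap L_f^-(\alpha)$. Being concave and even on $\R$, $\Psi_\alpha^{1/k}$ is maximized at $0$ and nonincreasing on $[0,\infty)$, which gives $\Psi_\alpha(\lambda)\geq\Psi_\alpha(1)$ for all $\lambda\in[0,1]$ and completes the proof of \eqref{eq:integral_conv_comb}.

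For the equality case, integral equality at some $\lambda\in[0,1)$ forces $\Psi_\alpha(\lambda)=\Psi_\alpha(1)$ for a.e.\ $\alpha>0$. Combining the concavity of $\Psi_\alpha^{1/k}$ with the fact that it is nonincreasing on $[0,\infty)$, a short chord argument propagates this to $\Psi_\alpha$ being constant on $[-1,1]$ for a.e.\ $\alpha$. The equality case of Brunn-Minkowski applied to the slices $(K+t\vv)\cap L_f^-(\alpha)$ (all of the same $k$-dimensional measure and lying in parallel affine translates of $\lin K$) then forces them to be translates of $K\cap L_f^-(\alpha)$ by a vector that depends affinely on $t$ and vanishes at $t=0$. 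Inspecting the constraint that these translates must remain inside $K+t\vv$ for every $t\in[-1,1]$ pins the translation down as $t\vv$, which at $t=1$ is exactly the asserted identity. The main obstacle is handling Brunn-Minkowski across distinct but parallel $k$-dimensional affine subspaces of $\R^n$ and extracting the correct translation direction from its equality case.
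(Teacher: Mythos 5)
For the inequality \eqref{eq:integral_conv_comb} your argument is correct and takes a genuinely more self-contained route than the paper, which disposes of the whole lemma in one line by applying Anderson's theorem to the quasiconcave truncation $\tilde f(\vx)=\max(c-f(\vx),0)$ with $c=\sup f(K+\vv)$; you instead inline Anderson's own mechanism (layer-cake decomposition plus Brunn--Minkowski on the slices $(K+t\vv)\cap L_f^-(\alpha)$), transposed to sublevel sets. One cosmetic imprecision: $\Psi_\alpha^{1/k}$ is not concave on all of $\R$ --- it is $1/k$-concave on the interval where the slice is non-empty and vanishes outside, and such a function need not be globally concave --- but all you need is the single chord from $t=-1$ to $t=1$ together with evenness, which is valid whenever the endpoint slices are non-empty and trivial otherwise, so the inequality stands.

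The equality case is where the proposal breaks down. The final step --- ``the constraint that these translates must remain inside $K+t\vv$ pins the translation down as $t\vv$'' --- is not a valid deduction. If $(K+t\vv)\cap L_f^-(\alpha)=A_0+t\vw$ for all $t\in[-1,1]$, the containment $A_0+t\vw\subseteq K+t\vv$ only yields $A_0\subseteq\bigcap_{|t|\le 1}\bigl(K+t(\vv-\vw)\bigr)$, which is perfectly consistent with $\vw\neq\vv$ when $A_0$ is small. Indeed the asserted ``only if'' characterization fails in general: take $n=k=1$, $K=[-2,2]$, $\vv=1$ and $f=\mathbf{1}_{\R\setminus[-1/2,1/2]}$, which is even, non-negative, quasiconvex and integrable on compacta. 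Then $\int_{K+\lambda\vv}f\diff\mathcal{H}^1=3$ for every $\lambda\in[0,1]$, so equality holds, yet for $\alpha=1/2$ one has $(K+\vv)\cap L_f^-(\alpha)=[-1/2,1/2]$ while $(K\cap L_f^-(\alpha))+\vv=[1/2,3/2]$; here the slices simply do not move ($\vw=\vnull$). What your Brunn--Minkowski equality analysis actually delivers --- and what is all that Proposition~\ref{prop:subspace_concentration_parallelotopes_lb} needs, since there the small sublevel sets meet $K$ but miss $K+\vv$ entirely --- is the weaker statement that for a.e.\ $\alpha$ all slices $(K+t\vv)\cap L_f^-(\alpha)$, $t\in[-1,1]$, have the same $\mathcal{H}^k$-measure, and are translates of one another when that measure is positive. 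You would additionally have to address the passage from ``a.e.\ $\alpha$'' to ``every $\alpha$'' and the degenerate slices of measure zero, for which Brunn--Minkowski gives no translation structure at all.
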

\begin{proof}  Apply \cite[Theorem 1]{Anderson:1955} to the function $\tilde{f}(\vx)=\max(c-f(\vx),0)$ where $c=\sup f(K+\vv)$.
\end{proof}

Based on Lemma \ref{lem:quasiconvex_integral} we can easily get a
lower bound on the subspace concentration of prisms.
\begin{proposition}
	\label{prop:subspace_concentration_parallelotopes_lb}
	Let $\vu\in \Sph^{n-1}$ and let $Q\in\Kn$ with $Q=-Q$, $Q\subset \vu^\perp$ and $\dim Q=n-1$. 
        Let $\vv\in\R^n\setminus \aff Q$ and let $P$ be the prism  $P=\conv(Q-\vv,Q+\vv)$. Then
        for $q>n$ 
	\begin{equation}
		\label{eq:subspace_concentration_lb}
		\tilde{C}_q(P,\{\vu, -\vu \}) > \frac{1}{q} \tilde{C}_q(P,\Sph^{n-1}).
	\end{equation}
\end{proposition}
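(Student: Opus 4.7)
The plan is to make both sides of \eqref{eq:subspace_concentration_lb} explicit via Lemma~\ref{lem:curvature_measure_euclidean_coordinates}, reduce the desired estimate to a one-variable comparison between translates of $Q$, and settle it through the Anderson-type Lemma~\ref{lem:quasiconvex_integral}. Since $\vv\notin\aff Q=\vu^\perp$, I may assume $\langle\vv,\vu\rangle>0$, in which case the two facets of $P$ parallel to $\aff Q$ are $Q\pm\vv$ with outer normals $\pm\vu$. First I would parametrize $P=Q+[-\vv,\vv]$ by the bijection $(\vy,t)\in Q\times[-1,1]\mapsto \vy+t\vv$, whose Jacobian is the constant $\langle\vv,\vu\rangle$, and each of the two cones $T^\pm=\conv(\vnull,Q\pm\vv)$---which are precisely the sets of those $\vx\in P\setminus\{\vnull\}$ whose boundary point $\rho_P(\vx)\vx$ lies in $Q\pm\vv$---by $(\vy,s)\in Q\times[0,1]\mapsto s(\vy\pm\vv)$ with Jacobian $s^{n-1}\langle\vv,\vu\rangle$.

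Substituting these parametrizations into Lemma~\ref{lem:curvature_measure_euclidean_coordinates} and evaluating the radial integrals $\int_0^1 s^{q-1}\diff s=1/q$, the origin-symmetry of $Q$ making the $-\vu$-cone contribute the same as the $\vu$-cone, yields
\begin{equation*}
\dcura_q(P,\{\vu,-\vu\}) = \frac{2\langle\vv,\vu\rangle}{n}\int_Q|\vy+\vv|^{q-n}\diff\mathcal{H}^{n-1}(\vy),
\end{equation*}
\begin{equation*}
\dcura_q(P,\Sph^{n-1}) = \frac{q\langle\vv,\vu\rangle}{n}\int_Q\int_{-1}^{1}|\vy+t\vv|^{q-n}\diff t\diff\mathcal{H}^{n-1}(\vy).
\end{equation*}
Folding the $t$-integral on $[-1,1]$ into twice the integral on $[0,1]$ (using the symmetry of $Q$ once more), inequality \eqref{eq:subspace_concentration_lb} reduces to
\begin{equation*}
\int_Q|\vy+\vv|^{q-n}\diff\mathcal{H}^{n-1}(\vy)>\int_0^1\int_{Q+t\vv}|\vx|^{q-n}\diff\mathcal{H}^{n-1}(\vx)\diff t.
\end{equation*}

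The function $f(\vx)=|\vx|^{q-n}$ is even, non-negative, and quasiconvex (its sublevel sets are origin-centered Euclidean balls), so Lemma~\ref{lem:quasiconvex_integral} applied to $Q$, the vector $\vv$, and the parameter $\lambda=t$ gives the non-strict inequality $\int_{Q+t\vv}|\vx|^{q-n}\diff\mathcal{H}^{n-1}(\vx)\leq\int_{Q+\vv}|\vx|^{q-n}\diff\mathcal{H}^{n-1}(\vx)$ for each $t\in[0,1]$. The only delicate point, and the main thing to verify, is strictness after integration. For this I would invoke the equality clause of Lemma~\ref{lem:quasiconvex_integral}: equality at some $t\in[0,1)$ would require $(Q+\vv)\cap rB_n=(Q\cap rB_n)+\vv$ for every $r>0$ (here $L_f^-(\alpha)$ is the ball of radius $\alpha^{1/(q-n)}$). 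But $Q+\vv\subset\vu^\perp+\vv$ lies at distance $\langle\vv,\vu\rangle>0$ from the origin, whereas $\vnull\in Q$; so for any $0<r<\langle\vv,\vu\rangle$ the left-hand side is empty while the right-hand side contains $\vv\neq\vnull$. Hence equality fails for every $t\in[0,1)$, strict inequality is preserved under integration, and the proposition follows. The main (modest) obstacle is simply keeping the two parametrizations and their Jacobians straight and verifying that the equality case of Lemma~\ref{lem:quasiconvex_integral} fails for a nontrivial range of radii; no substantive technical difficulty is anticipated.
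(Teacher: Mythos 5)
Your proposal is correct and follows essentially the same route as the paper: express both measures via Lemma~\ref{lem:curvature_measure_euclidean_coordinates} and Fubini as integrals over the translates $Q+t\vv$, evaluate the cone over the facet $Q+\vv$ by the radial scaling $\int_0^1 s^{q-1}\diff s = 1/q$, bound the slab integral by Lemma~\ref{lem:quasiconvex_integral}, and obtain strictness from its equality case by noting that $(Q+\vv)\cap rB_n$ is empty for small $r$ while $(Q\cap rB_n)+\vv$ is not. The only cosmetic difference is that the paper normalizes $\langle\vu,\vv\rangle=1$ by homogeneity, whereas you carry the Jacobian factor explicitly.
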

\begin{proof} Let  $L=\lin\{ \vu \}$. 
	Since the dual curvature measure is homogeneous we may assume
        that $\langle\vu,\vv\rangle=1$. By Lemma~\ref{lem:curvature_measure_euclidean_coordinates} we may write
	\begin{align*}
		\tilde{C}_q(P,\Sph^{n-1}) &= \frac{q}{n} \int_{P|L} \int_{P\cap (\vy+L^\perp)} |\vz|^{q-n}\diff\mathcal{H}^{n-1}(\vz)\diff\mathcal{H}(\vy)\\
		&= \frac{q}{n} \int_{-1}^1 \int_{Q+\tau\vv} |\vz|^{q-n}\diff\mathcal{H}^{n-1}(\vz)\diff \tau\\
		&= 2\frac{q}{n} \int_0^1 \int_{Q+\tau\vv} |\vz|^{q-n}\diff\mathcal{H}^{n-1}(\vz)\diff \tau.
	\end{align*}
	Applying Lemma~\ref{lem:quasiconvex_integral} to the inner
        integral gives 
	\begin{align*}
	\tilde{C}_q(P,\Sph^{n-1}) &\leq 2\frac{q}{n}\int_0^1 \int_{Q+\vv} |\vz|^{q-n}\diff\mathcal{H}^{n-1}(\vz)\diff \tau\\
	&= 2 \frac{q}{n} \int_{Q+v} |\vz|^{q-n}\diff\mathcal{H}^{n-1}(\vz).
	\end{align*}
	On the other hand, 
	\begin{align*}
	\tilde{C}_q(P,\{\vu, -\vu\}) &= 2\frac{q}{n} \int_0^1 \int_{\tau(Q+\vv)} |\vz|^{q-n}\diff\mathcal{H}^{n-1}(\vz)\diff \tau\\
	&= 2\frac{q}{n} \int_0^1 \int_{Q+\vv} \tau^{q-n} |\vz|^{q-n} \cdot\tau^{n-1}\diff\mathcal{H}^{n-1}(\vz) \diff \tau\\
	&= 2\frac{q}{n} \int_{Q+\vv} |\vz|^{q-n} \diff\mathcal{H}^{n-1}(\vz) \int_0^1 \tau^{q-1} \diff \tau\\
	&= 2 \frac{1}{n} \int_{Q+\vv} |\vz|^{q-n} \diff\mathcal{H}^{n-1}(\vz).
	\end{align*}
	This gives \eqref{eq:subspace_concentration_lb} without strict
        inequality. Suppose we have equality. Then  the equality
        characterization of Lemma~\ref{lem:quasiconvex_integral}
        implies  
	\[ (Q+\vv)\cap rB_n = (Q\cap r B_n)+\vv \]
	for almost all $r>0$. But for small $r$ the left hand side is empty and hence equality in \eqref{eq:subspace_concentration_lb} cannot be attained.
\end{proof}

As a consequence we deduce an upper bound on the subspace concentration of dual curvature measures of parallelotopes.

\begin{corollary}
	\label{cor:subspace_concentration_parallelotopes_ub}
	Let $P\in\Ken$ be a parallelotope, and let $L\subset \R^n$ be
        a proper subspace of $\R^n$. Then for $q>n$ 
	\begin{equation}
	\frac{\tilde{C}_q(P,\Sph^{n-1}\cap L)}{\tilde{C}_q(P,\Sph^{n-1})} < \frac{q-n+\dim L}{q}.
	\end{equation}
\end{corollary}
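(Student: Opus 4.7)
The plan is to reduce the claimed inequality to $n$ applications of Proposition~\ref{prop:subspace_concentration_parallelotopes_lb}, exploiting the fact that a centrally symmetric parallelotope is a prism in $n$ different ways---one for each pair of parallel facets.

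I would first fix linearly independent half-edge vectors $\vw_1,\ldots,\vw_n\in\R^n$ such that $P=\{\sum_{i=1}^n t_i\vw_i : t_i\in[-1,1]\}$, and let $\pm\vu_1,\ldots,\pm\vu_n$ denote the unit facet normals of $P$, where $\vu_j$ is the unit vector orthogonal to $\lin\{\vw_i : i\neq j\}$. Since the $\vu_j$ are linearly independent and $\langle\vu_j,\vw_j\rangle\neq 0$, the $(n-1)$-dimensional centrally symmetric parallelotope $Q_j=\{\sum_{i\neq j}t_i\vw_i:t_i\in[-1,1]\}$ lies in $\vu_j^\perp$, and $P=\conv(Q_j-\vw_j,Q_j+\vw_j)$ with $\vw_j\notin\aff Q_j$. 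Thus $P$ fits the prism template of Proposition~\ref{prop:subspace_concentration_parallelotopes_lb} along every facet normal direction, and applying the proposition for each $j\in\{1,\ldots,n\}$ yields
\begin{equation*}
\dcura_q(P,\{\vu_j,-\vu_j\}) > \frac{1}{q}\,\dcura_q(P,\Sph^{n-1}).
\end{equation*}

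Next, since $P$ is a polytope, its $q$-th dual curvature measure is concentrated on the facet normals: for any $\vv\in\Sph^{n-1}$ that is not a facet normal, $\alpha_P^\ast(\{\vv\})$ has $\mathcal{H}^{n-1}$-measure zero. Therefore, writing $S=\{i\in\{1,\ldots,n\} : \vu_i\in L\}$, I have
\begin{equation*}
\dcura_q(P,\Sph^{n-1}) = \sum_{j=1}^n \dcura_q(P,\{\vu_j,-\vu_j\}), \qquad \dcura_q(P,\Sph^{n-1}\cap L) = \sum_{i\in S}\dcura_q(P,\{\vu_i,-\vu_i\}).
\end{equation*}
Linear independence of $\vu_1,\ldots,\vu_n$ forces $|S|\leq\dim L=k$, while the properness of $L$ forces $|S|\leq n-1$, so at least one index lies outside $S$. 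Summing the prism inequality above over all $j\notin S$ gives
\begin{equation*}
\sum_{j\notin S}\dcura_q(P,\{\vu_j,-\vu_j\}) > \frac{n-|S|}{q}\,\dcura_q(P,\Sph^{n-1}),
\end{equation*}
and subtracting this strict inequality from the total decomposition produces
\begin{equation*}
\dcura_q(P,\Sph^{n-1}\cap L) < \frac{q-n+|S|}{q}\,\dcura_q(P,\Sph^{n-1}) \leq \frac{q-n+k}{q}\,\dcura_q(P,\Sph^{n-1}),
\end{equation*}
which is the claimed bound.

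There is no significant obstacle in the argument: the key point is to recognize that $P$ admits a prism decomposition along each of its facet normals, after which Proposition~\ref{prop:subspace_concentration_parallelotopes_lb} supplies $n$ lower bounds and the rest is a simple counting argument. Strictness of the final inequality follows from strictness in the prism estimate combined with the fact that $|S|\leq n-1$, which guarantees that at least one of the summed strict inequalities genuinely appears.
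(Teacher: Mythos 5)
Your proof is correct and follows essentially the same route as the paper: apply Proposition~\ref{prop:subspace_concentration_parallelotopes_lb} to $P$ viewed as a prism over each of its $n$ pairs of parallel facets, decompose $\dcura_q(P,\cdot)$ over the facet normals, and count how many normals $L$ can contain. You merely spell out two steps the paper leaves implicit (that a parallelotope is a prism in each facet direction, and that the measure is concentrated on the facet normals), which is fine.
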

\begin{proof}
	Let $\pm \vu_1,\ldots,\pm \vu_n\in \Sph^{n-1}$ be the outer normal vectors of $P$. In particular, $\vu_1,\ldots,\vu_n$ are linearly independent and by Proposition~\ref{prop:subspace_concentration_parallelotopes_lb}
	\begin{align*}
	\tilde{C}_q(P,\Sph^{n-1}\cap L) &= \sum_{\vu_i\in L}
                                          \tilde{C}_q(P,\{\vu_i, -\vu_i \})\\
	&= \tilde{C}_q(P,\Sph^{n-1})-\sum_{\vu_i\notin L}
          \tilde{C}_q(P,\{\vu_i, -\vu_i \})\\
	&< \tilde{C}_q(P,\Sph^{n-1})-\sum_{\vu_i\notin L} \frac{1}{q}\tilde{C}_q(P,\Sph^{n-1}).
	\end{align*}
	$L$ can contain at most $\dim L$ of the $\vu_i$'s and therefore
	\begin{align*}
	\tilde{C}_q(P,\Sph^{n-1}\cap L) &< \tilde{C}_q(P,\Sph^{n-1})-\frac{n-\dim L}{q}\tilde{C}_q(P,\Sph^{n-1})\\
	&= \frac{q-n+\dim L}{q} \tilde{C}_q(P,\Sph^{n-1}).
	\end{align*}
\end{proof}

In particular this settles the $2$-dimensional case as it can be reduced to proving a subspace bound for parallelograms.

\begin{proof}[Proof of Theorem~\ref{thm:subspace_bound_plane}]
	Let $L=\lin\{\vu\}$, $\vu\in \Sph^1$. If $F=K\cap
        H(\vu,h_K(\vu))$ is a singleton, inequality \eqref{eq:subspace_concentration_plane} trivially holds. Assume $\dim F=1$. By an inclusion argument it suffices to prove \eqref{eq:subspace_concentration_plane} for $P=\conv(F\cup (-F))$. Since $F$ is a line segment, $P$ is a parallelogram and Corollary~\ref{cor:subspace_concentration_parallelotopes_ub} gives \eqref{eq:subspace_concentration_plane} for $P$.
\end{proof}

\section*{Acknowledgements}
The authors thank K{\'a}roly~J. B{\"o}r{\"o}czky, Apostolos Giannopoulos and Yiming Zhao for their very helpful comments and suggestions.


\end{document}